\numberwithin{equation}{section}
\theoremstyle{plain}
\newtheorem{Thm}{Theorem}[section]
\newtheorem{Lem}[Thm]{Lemma}
\newtheorem{Prop}[Thm]{Proposition}
\newtheorem{Cor}[Thm]{Corollary}
\theoremstyle{definition}
\tikzstyle{vertex}=[circle, draw, inner sep=0pt, minimum size=6pt] 
\title{On $(1,2)$-step competition graphs of multipartite tournaments II}
\author[]{Myungho Choi
\thanks{Corresponding author\\ 
E-mail addresses: nums8080@snu.ac.kr(M.Choi), srkim@snu.ac.kr(S.-R.Kim)}}
\author[]{Suh-Ryung Kim
%
}
\affil[]{Department of Mathematics Education,
Seoul National University, Seoul 08826, Republic of Korea}
 \newcounter{statement}
\newcommand{\statement}[2]{%
 \begin{equation}\refstepcounter{statement}\tag{S\thestatement}\label{#1}%
  \parbox{\dimexpr\linewidth-4em}{#2}%
 \end{equation}%
}
\begin{document}
\maketitle
\begin{abstract}
A multipartite tournament is an orientation of a complete $k$-partite graph for some positive integer $k\geq 3$. 
We say that a multipartite tournament $D$ is tight if every partite set forms a clique in the $(1,2)$-step competition graph, denoted by $C_{1,2}(D)$, of $D$.
In the previous paper titled ``On $(1,2)$-step competition graphs of multipartite tournaments" \cite{choi202412step}
we completely characterize $C_{1,2}(D)$ for a tight multipartite tournament $D$.
As an extension, in this paper, we study $(1,2)$-step competition graphs of multipartite tournaments that are not tight, which will be called loose.
For a loose multipartite tournament $D$, various meaningful results are obtained in terms of $C_{1,2}(D)$ being interval and $C_{1,2}(D)$ being connected.
\end{abstract}
\noindent
{\it Keywords.} multipartite tournament; orientation of a complete multipartite graph; $(1,2)$-step competition graph; interval graph; connectedenss; diameter.

\noindent
{{{\it 2010 Mathematics Subject Classification.} 05C20, 05C75}}

\section{Introduction}
In this paper, all the graphs and digraphs are assumed to be finite and simple. (For all undefined
graph theory terminologies, see \cite{bondy}.)
Given a digraph $D$ and a vertex $v$ of $D$,
we define $N^+(v)=\{u \in V(D) \mid (v,u) \in A(D)\}$, $N^-(v)=\{u \in V(D) \mid (u,v) \in A(D) \}$, $d^+(v)=|N^+(v)|$, and $d^-(v)=|N^-(v)|$.
We use the expression $u \to v$ when $(u,v) \in A(D)$. 
When representing negation, add a slash ($/$) to the symbol.

For vertices $x$ and $y$ in a digraph $D$, $d_D(x,y)$ denotes the number of arcs in a shortest directed path from $x$ to $y$ in $D$ if it exists.
For positive integers $i$ and $j$, the {\it $(i,j)$-step competition graph } of a digraph $D$, denoted by $C_{i,j}(D)$, is a graph on $V(D)$ where $uv \in E(C_{i,j}(D))$ if and only if there exists a vertex $w$ distinct from $u$ and $v$ such that (a) $d_{D-v}(u,w) \leq i$ and $d_{D-u}(v,w) \leq j$ or (b) $d_{D-u}(v,w) \leq i$ and $d_{D-v}(u,w) \leq j$.
 If two vertices of a digraph $D$ are adjacent in $C_{1,2}(D)$, then we just say that they are adjacent in the rest of this paper.
 
The $(1,1)$-step competition graph of a digraph $D$ is the {\it competition graph} of $D$.
Given a digraph $D$, the competition graph of $D$, denoted by $C(D)$, is the graph having the vertex set $V(D)$ and the edge set $\{uv \mid u \to w, v\to w \text{ for some } w \in V(D) \}$.
Cohen~\cite{cohen1968interval} introduced the notion of competition graph while studying
predator–prey concepts in ecological food webs. Cohen’s empirical observation that real-world competition graphs are
usually interval graphs had led to a great deal of research on the structure of competition graphs and on the relation between
the structure of digraphs and their corresponding competition graphs. In the same vein, various variants of competition graph
have been introduced and studied, one of which is the notion of $(i, j)$-step competition introduced by Factor and Merz~\cite{factor20111}. For recent work on this topic, see
\cite{cohen1968interval,factor20111,kamibeppu2012sufficient,
kim2015generalization,kuhl2013transversals,li2012competition,
mckay2014competition,zhang20161,zhang2013note}.
Choi \emph{et al.}~\cite{choi20171} studied the $(1,2)$-step competition graph of an orientation of a complete bipartite graph.
In this paper, we study the $(1, 2)$-step competition graph of an orientation of a complete $k$-partite graph for some integer $k \geq 3$.

For a digraph $D$, we say that vertices $u$ and $v$ in $D$ $(1,2)$-{\it compete} provided there exists a vertex $w$ distinct from $u$, $v$ that satisfies one of the following:
\begin{itemize}
\item{} there exist an arc $(u,w)$ and a directed $(v,w)$-path of length $2$ not traversing $u$;
\item{} there exist a directed $(u,w)$-path of length $2$ not traversing $v$ and an arc $(v,w)$.
\end{itemize}
We call $w$ in the above definition a {\it $(1,2)$-step common out-neighbor }of $u$ and $v$.
 It is said that two vertices {\it compete} if they have a common out-neighbor. 
 If $u$ and $v$ compete or $(1,2)$-compete,
 then we say that $u$ and $v$
 $\{1,2\}$-{\it compete} in $D$. 
 We also say that a set is {\it $\{1,2\}$-competing} (resp.\ anti-$\{1,2\}$-competing) if any pair of vertices in the set $\{1,2\}$-competes (resp.\ if no two vertices in the set $\{1,2\}$-compete).

We note that a vertex set $S$ of a digraph $D$ is a $\{1,2\}$-competing set (resp.\ an anti-$\{1,2\}$-competing set) if and only if $S$ is a clique (resp.\ a stable set) in $C_{1,2}(D)$.
Here, a \emph{stable set} of a graph is a set of vertices no two of which are adjacent.
 
 Given vertex sets $X$ and $Y$ of a graph $G$,
 we use the symbol $X \sim Y$ if for each $x\in X$ and $y\in Y-\{x\}$, $x$ and $y$ are adjacent in $G$.
When $X$ or $Y$ is a singleton, the set brackets are not used. 
For example, we write $ u \sim Y$ if $X=\{u\}$. 

 If $u$ and $v$ are adjacent in $C_{1,2}(D)$ by competing (resp.\ $(1,2)$-competing), then it is simply represented as $u \sim_1 v$ (resp.\ $u \sim_{1,2} v$).
 We note that $u \sim v$ if and only if $u\sim_1 v$ or $u\sim_{1,2}v$.

We call an orientation of a complete $k$-partite graph for some positive integer $k$ a \emph{$k$-partite tournament}. 
In this paper, a \emph{multipartite tournament} refers to a $k$-partite tournament with $k \ge 3$.
A tournament of order $n$ may be regarded as an $n$-partite tournament for some positive integer $n$.
 For simplicity, we call a multipartite tournament with a non-$\{1,2\}$-competing partite set a {\it loose} multipartite tournament.
 
 In this paper, we study $(1, 2)$-step competition graphs of loose multipartite tournaments as an extension of the paper titled ``On $(1,2)$-step competition graphs of multipartite tournaments I" \cite{choi202412step}, which completely characterizes $C_{1,2}(D)$ when $D$ is a non-loose multipartite tournament. For a loose multipartite tournament $D$, various meaningful results are obtained in terms of $C_{1,2}(D)$ being interval and $C_{1,2}(D)$ being connected.
 Specifically, using structure features of $C_{1,2}(D)$ (Theorem~\ref{thm:structure}), we drive the following useful results: (a) each component of $C_{1,2}(D)$ has diameter of at most two; (b) $D$ has no sinks if and only if $C_{1,2}(D)$ is connected; (c) $D$ has at most two non-$\{1,2\}$-competing partite sets (Theorem~\ref{thm:Type-AorB-distance}).
We verify that if each partite set of $D$ has size at most two, then $C_{1,2}(D)$ is an interval graph (Theorems~\ref{thm:partite_size_interval}).
Finally, we prove that $C_{1,2}(D)$ is an interval graph if and only if $C_{1,2}(D)$ has no induced cycle of length four when $D$ has exactly two non-$\{1,2\}$-competing partite sets (Theorem~\ref{thm:C_4_equivalent}).

\section{Preliminaries}\label{property}
Let $D$ be a multipartite tournament.
We call a vertex of outdegree $0$ in a digraph $D$ a {\em sink} of $D$.
It is obvious that
each non-sink vertex has at least one out-neighbor.
For a non-sink vertex $u$ and a vertex $v$,
$u \stackrel{*}{\to} v$ means that $v$ is the only out-neighbor of $u$.

\begin{Prop}[\cite{choi202412step}]\label{prop:same-partite}
Let $D$ be a multipartite tournament and $u$ and $v$ be two non-sink vertices belonging to the same partite set in $D$. 
Then the following are true:
\begin{itemize}
\item[(1)] $u$ and $v$ do not compete if and only if $N^+(u) \cap N^+(v) = \emptyset$;
\item[(2)] $u$ and $v$ do not $(1,2)$-compete if and only if $N^+(u) \cup N^+(v) \subseteq X$ for some partite set $X$ of $D$.
\item[(3)] $u$ and $v$ are not adjacent in $C_{1,2}(D)$ if and only if $N^+(u) \cap N^+(v) = \emptyset$ and $N^+(u) \cup N^+(v) \subseteq X$ for some partite set $X$ of $D$.
\end{itemize}
\end{Prop}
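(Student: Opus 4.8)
The plan is to handle the three parts in order, using throughout the decisive fact that, since $u$ and $v$ lie in a common partite set, there is no arc between them; hence $u\notin N^+(v)$ and $v\notin N^+(u)$, and more generally no two vertices of a single partite set are joined by an arc. For part (1) I would argue straight from the definition of competing: two vertices compete exactly when they have a common out-neighbor $w$ distinct from both. Any $w\in N^+(u)\cap N^+(v)$ satisfies $u\to w$ and $v\to w$, so $w\neq u$ and $w\neq v$ hold automatically and $w$ is a legitimate witness; conversely every witness lies in $N^+(u)\cap N^+(v)$. Thus $u$ and $v$ compete if and only if $N^+(u)\cap N^+(v)\neq\emptyset$, which is the contrapositive of the claim.

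Part (2) is the heart of the statement, and the first task is to reduce $(1,2)$-competition to a statement about arcs between out-neighborhoods. Unwinding the definition, $u$ and $v$ $(1,2)$-compete if and only if there is a directed path $v\to z\to w$ of length two together with an arc $u\to w$ (the path avoiding $u$), or the symmetric configuration with the roles of $u$ and $v$ interchanged. In the first case $z\in N^+(v)$, $w\in N^+(u)$ and $z\to w$, and here the side conditions $w\neq u,v$, $z\neq u,v$ and $z\neq w$ are all forced by the same-partite-set hypothesis together with the fact that an arc joins distinct vertices. Consequently $u$ and $v$ $(1,2)$-compete if and only if there is an arc, in either direction, between some vertex of $N^+(u)$ and a distinct vertex of $N^+(v)$. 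Since two vertices of $D$ are adjacent precisely when they belong to different partite sets, this is in turn equivalent to the existence of $p\in N^+(u)$ and $q\in N^+(v)$ lying in different partite sets.

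I would then finish part (2) by negating this equivalence: $u$ and $v$ fail to $(1,2)$-compete if and only if every vertex of $N^+(u)$ lies in the same partite set as every vertex of $N^+(v)$. Because $u$ and $v$ are non-sinks, $N^+(u)$ and $N^+(v)$ are nonempty, so a short transitivity argument (fix one vertex in each out-neighborhood and compare) upgrades this ``pairwise in a common partite set'' condition to the assertion that $N^+(u)\cup N^+(v)$ is contained in a single partite set $X$; the converse implication is immediate, since any $p\in N^+(u)$ and $q\in N^+(v)$ then both lie in $X$ and admit no arc between them. Finally, part (3) follows by combining the first two parts: as $u$ and $v$ are adjacent in $C_{1,2}(D)$ if and only if they $\{1,2\}$-compete, that is, if and only if they compete or $(1,2)$-compete, non-adjacency is equivalent to the conjunction of ``do not compete'' and ``do not $(1,2)$-compete'', and applying (1) and (2) yields exactly $N^+(u)\cap N^+(v)=\emptyset$ together with $N^+(u)\cup N^+(v)\subseteq X$ for some partite set $X$. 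I expect the only genuinely delicate point of the whole argument to be the reduction in part (2)—specifically, checking that the ``not traversing'' and distinctness conditions built into the definition of $(1,2)$-competition are automatically satisfied, which is precisely where the hypotheses that $u$ and $v$ share a partite set and that both are non-sinks do the real work.
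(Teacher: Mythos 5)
Your argument is correct: the reduction of $(1,2)$-competition to the existence of $p\in N^+(u)$ and $q\in N^+(v)$ in distinct partite sets, with all distinctness and ``not traversing'' conditions discharged by the same-partite-set hypothesis, is exactly the right way to unwind the definition, and the non-sink hypothesis is used precisely where it must be (to get nonempty out-neighborhoods for the transitivity step in part (2)). Note that this paper states Proposition~\ref{prop:same-partite} without proof, importing it from the companion paper \cite{choi202412step}, so there is no in-text proof to compare against; your direct definitional argument is the natural one and there is nothing to add.
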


\begin{Prop}[\cite{choi202412step}] \label{Prop:different-partite-adjacent}
Let $D$ be a multipartite tournament and $u$ and $v$ be two non-sink vertices with $(u,v)\in A(D)$ belonging to the distinct partite sets in $D$.
Then the following are equivalent:
\begin{itemize}
\item[(i)] $u$ and $v$ are not adjacent in $C_{1,2}(D)$;
\item[(ii)]
either
$u \stackrel{*}{\to} v$, or $N^+(u) \cap N^+(v) = \emptyset$ and there exists a partite set $X$ such that $N^+(v) \subseteq X$ and $N^+(u) \subseteq X \cup \{v\}$.
\end{itemize}
\end{Prop}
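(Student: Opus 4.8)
The plan is to prove the equivalence by translating both ``competing'' and ``$(1,2)$-competing'' into purely set-theoretic conditions on $N^+(u)$ and $N^+(v)$, and then to read off (ii) directly. First I record the competition condition, which is easy: any $w\in N^+(u)\cap N^+(v)$ is automatically distinct from $u$ and $v$ (there are no loops), so $u\sim_1 v$ if and only if $N^+(u)\cap N^+(v)\neq\emptyset$. The hypothesis $u\to v$ will be used repeatedly to kill boundary cases, via the observations $u\notin N^+(v)$ and $w\neq u$ whenever $w\in N^+(u)$.

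The main step, and the one needing the most care, is to reformulate the $(1,2)$-competition as follows: $u\sim_{1,2}v$ if and only if there exist $a\in N^+(u)\setminus\{v\}$ and $b\in N^+(v)$ lying in distinct partite sets. To establish this I will unwind the two bullets of the definition. For the first bullet ($u\to w$ together with a directed $(v,w)$-path $v\to z\to w$ avoiding $u$), I will check that the clauses ``$w\neq u$'' and ``$z\neq u$'' are forced automatically ($w\in N^+(u)$ gives $w\neq u$, and $z\in N^+(v)$ with $u\notin N^+(v)$ gives $z\neq u$), so the only genuine restriction is $w\neq v$; hence this bullet holds exactly when some $w\in N^+(u)\setminus\{v\}$ and some $z\in N^+(v)$ satisfy $z\to w$. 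Symmetrically, the second bullet holds exactly when some $z\in N^+(u)\setminus\{v\}$ and some $w\in N^+(v)$ satisfy $z\to w$, the clause $w\neq u$ being forced by $u\to v$. Renaming, the two bullets together say precisely that some $a\in N^+(u)\setminus\{v\}$ and $b\in N^+(v)$ satisfy $a\to b$ or $b\to a$, which in the underlying complete multipartite graph means exactly that $a$ and $b$ lie in distinct partite sets. The obstacle here is the careful bookkeeping of the ``distinct from $u,v$'' and ``not traversing'' clauses; this is exactly where the assumption $u\to v$ (as opposed to an arbitrary pair) is essential.

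With these two reformulations in hand the equivalence follows. For (ii)$\Rightarrow$(i): if $u\stackrel{*}{\to}v$ then $N^+(u)\setminus\{v\}=\emptyset$ and $N^+(u)\cap N^+(v)=\emptyset$, so neither reformulated condition can hold; and if instead $N^+(u)\cap N^+(v)=\emptyset$ with $N^+(v)\subseteq X$ and $N^+(u)\subseteq X\cup\{v\}$ for some partite set $X$, then $N^+(u)\setminus\{v\}$ and $N^+(v)$ both lie inside $X$, so no $a,b$ in distinct partite sets exist; either way $u$ and $v$ are not adjacent. For (i)$\Rightarrow$(ii): non-adjacency gives $N^+(u)\cap N^+(v)=\emptyset$ together with the property that every $a\in N^+(u)\setminus\{v\}$ and every $b\in N^+(v)$ share a partite set. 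Since $v$ is a non-sink, $N^+(v)\neq\emptyset$. If $N^+(u)\setminus\{v\}=\emptyset$, then $u$ being a non-sink with $v\in N^+(u)$ forces $N^+(u)=\{v\}$, i.e.\ $u\stackrel{*}{\to}v$. Otherwise, fixing any $b_0\in N^+(v)$ confines $N^+(u)\setminus\{v\}$ to the partite set $X$ containing $b_0$, and then fixing any $a_0\in N^+(u)\setminus\{v\}\subseteq X$ confines $N^+(v)$ to $X$ as well; this yields $N^+(v)\subseteq X$ and $N^+(u)\subseteq X\cup\{v\}$, which is the second alternative of (ii), completing the argument.
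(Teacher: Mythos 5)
Your proof is correct: the reduction of adjacency to the two set-theoretic conditions ($N^+(u)\cap N^+(v)\neq\emptyset$ for competing, and the existence of $a\in N^+(u)\setminus\{v\}$, $b\in N^+(v)$ in distinct partite sets for $(1,2)$-competing) is carefully justified, with the roles of $u\to v$ in eliminating the ``distinct from $u,v$'' and ``not traversing'' clauses handled correctly, and the equivalence with (ii) then follows as you state. The paper itself imports this proposition from \cite{choi202412step} without reproving it, but your argument is the natural unwinding of the definitions and is consistent in spirit with the set-theoretic reformulations the paper uses elsewhere (Proposition~\ref{prop:same-partite} and Corollary~\ref{Cor:cor-adajcent}).
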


\begin{Cor}[\cite{choi202412step}]\label{Cor:cor-adajcent}
Let $D$ be a multipartite tournament and $u$, $v$ be two non-sink vertices in $D$.
Then $u$ and $v$ are adjacent in $C_{1,2}(D)$ if and only if the following happen:
\begin{enumerate}[{(1)}]
\item $v$ is not the only out-neighbor of $u$;
\item $u$ is not the only out-neighbor of $v$;
\item for any partite set $X$ with $N^+(v) \subseteq X$ (resp.\ $N^+(u) \subseteq X$),
 $N^+(u)\not\subseteq X \cup \{v\}$ (resp.\ 
 $N^+(v)\not\subseteq X \cup \{u\}$).
\end{enumerate}
\end{Cor}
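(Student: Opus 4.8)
The plan is to establish the biconditional in its contrapositive form: $u$ and $v$ fail to be adjacent in $C_{1,2}(D)$ precisely when at least one of (1), (2), (3) is violated. Since the corollary makes no hypothesis on the relative position of $u$ and $v$, I would split the argument into two cases according to whether $u$ and $v$ lie in the same partite set or in distinct partite sets, invoking Proposition~\ref{prop:same-partite} in the former case and Proposition~\ref{Prop:different-partite-adjacent} in the latter, and then rewrite the resulting criteria into the uniform language of (1)--(3).

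Suppose first that $u$ and $v$ belong to the same partite set. As $D$ is a multipartite tournament there is no arc between them, so $v \notin N^+(u)$ and $u \notin N^+(v)$; because both vertices are non-sinks, this makes conditions (1) and (2) hold automatically. The claim then reduces to matching Proposition~\ref{prop:same-partite}(3) against condition (3). Here I would observe that $v \notin N^+(u)$ makes the containment $N^+(u) \subseteq X \cup \{v\}$ equivalent to $N^+(u) \subseteq X$, and symmetrically for $u$, so that the failure of the two ``resp.'' clauses of (3) becomes exactly $N^+(u) \cup N^+(v) \subseteq X$ for some partite set $X$.

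Now suppose $u$ and $v$ lie in distinct partite sets, so that exactly one arc joins them; by the symmetry of adjacency together with the symmetric (resp.) phrasing of (3), I may assume $(u,v) \in A(D)$. I would then apply Proposition~\ref{Prop:different-partite-adjacent}, whose condition~(ii) expresses non-adjacency as the disjunction of $u \stackrel{*}{\to} v$ and the existence of a partite set $X$ with $N^+(v) \subseteq X$ and $N^+(u) \subseteq X \cup \{v\}$. The first alternative is precisely the failure of condition (1); the second is precisely the failure of the first clause of (3). The ``resp.'' clause of (3) and condition (2) are handled by noting that $(u,v) \in A(D)$ forces $u \notin N^+(v)$, so condition (2) holds and the reverse-orientation clause of (3) is recovered when one instead assumes $(v,u) \in A(D)$. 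Assembling the two cases yields the stated equivalence.

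The step I expect to require the most care is the reconciliation of the \emph{competition} condition with condition (3). Propositions~\ref{prop:same-partite} and~\ref{Prop:different-partite-adjacent} both carry an explicit disjointness hypothesis $N^+(u) \cap N^+(v) = \emptyset$ guarding against a common out-neighbor, whereas condition (3) is phrased purely through partite containments. The delicate point is therefore to verify that the disjointness clauses combine with the containment clauses so as to reproduce \emph{exactly} (3) in both arc orientations; I would track the role of a putative common out-neighbor $w \in N^+(u) \cap N^+(v)$ through each containment $N^+(v) \subseteq X$, $N^+(u) \subseteq X \cup \{v\}$ to confirm that competition is correctly recorded as adjacency and is not spuriously absorbed into the containment hypothesis of (3).
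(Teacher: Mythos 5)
Your two-case decomposition (same partite set via Proposition~\ref{prop:same-partite}(3), distinct partite sets via Proposition~\ref{Prop:different-partite-adjacent}) is the natural route --- the paper itself offers no proof to compare against, since the corollary is imported from \cite{choi202412step} --- but the step you explicitly postpone, namely reconciling the hypothesis $N^+(u)\cap N^+(v)=\emptyset$ appearing in both propositions with the purely containment-based condition (3), is not a routine verification: it is the place where the argument breaks. The negation of (3) is ``there exists a partite set $X$ with $N^+(v)\subseteq X$ and $N^+(u)\subseteq X\cup\{v\}$, or symmetrically,'' and this says nothing about disjointness of out-neighborhoods. Hence a pair that \emph{competes} can still violate (3): take partite sets $\{u,v\}$, $\{w\}$, $\{z\}$ with $u\to w$, $v\to w$, $w\to z$, $z\to u$, $z\to v$. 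All four vertices are non-sinks, $u$ and $v$ are adjacent because $w$ is a common out-neighbor, and (1), (2) hold, yet (3) fails with $X=\{w\}$ since $N^+(u)=N^+(v)=\{w\}$. So your computation in the same-partite-set case that the failure of the two resp.\ clauses of (3) ``becomes exactly $N^+(u)\cup N^+(v)\subseteq X$'' is correct as a computation of $\neg(3)$, but matching it to Proposition~\ref{prop:same-partite}(3) additionally requires $N^+(u)\cap N^+(v)=\emptyset$, which $\neg(3)$ does not supply; the same mismatch occurs against clause (ii) of Proposition~\ref{Prop:different-partite-adjacent}.

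The repair lies in the statement rather than in your outline: condition (3) must be guarded by the competition clause, e.g.\ read as ``$N^+(u)\cap N^+(v)\neq\emptyset$, or for any partite set $X$ with $N^+(v)\subseteq X$ (resp.\ $N^+(u)\subseteq X$), $N^+(u)\not\subseteq X\cup\{v\}$ (resp.\ $N^+(v)\not\subseteq X\cup\{u\}$).'' With that amendment your argument goes through essentially verbatim; in the distinct-partite-set case with $u\to v$ you should also record the small observation that the reversed clause of (3) holds automatically, because $N^+(u)\subseteq X$ forces $v\in X$, whence $N^+(v)\subseteq X\cup\{u\}$ would give $N^+(v)=\{u\}$, contradicting $u\to v$. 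As submitted, however, the proof has a genuine gap exactly at the point you flagged: carrying out the check you defer does not confirm the match --- it refutes it.
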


\section{Structure of $C_{1,2}(D)$ for a loose multipartite tournament $D$}
\label{one-partite-set}
In this section, we characterize $(1,2)$-step competition graphs of multipartite tournaments with a non-$\{1,2\}$-competing partite set.
To this end, we need the following proposition.

\begin{Prop} \label{Prop:sub-digraph}
Let $D$ be a loose multipartite tournament with an anti-$\{1,2\}$-competing set $S$ of size at least two in a non-$\{1,2\}$-competing partite set $X$.
Then the following parts are valid:
  \begin{enumerate}[{(1)}]
   \item if $S$ is not precisely composed of one non-sink vertex and sink vertices, then $\bigcup_{u \in S} N^+(u)$ is contained in a partite set of $D$; 
    \item   if $S$ has at least three vertices, then any pair of vertices in $V(D)- X$ has a common out-neighbor in $S$.
    \end{enumerate}
\end{Prop}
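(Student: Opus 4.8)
The plan is to base everything on one elementary observation: because $S$ is anti-$\{1,2\}$-competing, no two of its vertices compete, and since competing just means having a common out-neighbor, we get $N^+(s)\cap N^+(s')=\emptyset$ for all distinct $s,s'\in S$ (this holds trivially when one of them is a sink). I will combine this with the fact that every vertex of $V(D)-X$ lies in a partite set other than $X$ and is therefore adjacent in $D$ to every vertex of $S\subseteq X$. The two parts are then essentially independent.

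For part (1), I would split on the number of non-sink vertices in $S$. If $S$ consists only of sinks, then $\bigcup_{u\in S}N^+(u)=\emptyset$ and the conclusion is immediate; the case of exactly one non-sink vertex is precisely what the hypothesis excludes, so I may assume $S$ contains non-sink vertices $u_1,\dots,u_m$ with $m\ge 2$. For each pair $u_i,u_j$, the fact that they do not $(1,2)$-compete lets me apply Proposition~\ref{prop:same-partite}(2) to place $N^+(u_i)\cup N^+(u_j)$ inside a single partite set. Fixing $u_1$ and using that $N^+(u_1)$ is nonempty, all these partite sets must equal the unique partite set $Y$ containing $N^+(u_1)$, since a nonempty set lies in at most one partite set. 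Hence $N^+(u_j)\subseteq Y$ for every $j$, and because the sink vertices contribute nothing to the union, $\bigcup_{u\in S}N^+(u)\subseteq Y$.

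For part (2), fix distinct $p,q\in V(D)-X$ and look for a common out-neighbor inside $S$. The key counting step is that at most one vertex of $S$ can point to $p$: if $s\to p$ and $s'\to p$ with $s\neq s'$ in $S$, then $p\in N^+(s)\cap N^+(s')$, contradicting disjointness. Since every vertex of $S$ is adjacent to $p$, it follows that $p\to s$ holds for at least $|S|-1$ vertices $s\in S$, and likewise for $q$. Writing $A_p$ and $A_q$ for these out-neighbor sets in $S$, inclusion--exclusion gives $|A_p\cap A_q|\ge 2(|S|-1)-|S|=|S|-2\ge 1$ as soon as $|S|\ge 3$, so $p$ and $q$ share an out-neighbor in $S$.

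I do not anticipate a genuine obstacle; the proof is short once the disjointness observation is in place. The one point that needs care is the bookkeeping around sinks, and in particular making clear why the single non-sink case must be excluded in part (1): a lone non-sink vertex's out-neighborhood is unconstrained by Proposition~\ref{prop:same-partite}, since there is no second non-sink vertex to force a common containing partite set, so it can genuinely spread across several partite sets.
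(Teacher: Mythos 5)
Your proof is correct and follows essentially the same route as the paper's: a pairwise application of Proposition~\ref{prop:same-partite} together with the uniqueness of the partite set containing a fixed nonempty out-neighborhood for part (1), and the observation that at most one vertex of $S$ can dominate a given vertex of $V(D)-X$ followed by the same inclusion--exclusion count $2(|S|-1)-|S|\ge 1$ for part (2). Your explicit treatment of the all-sinks subcase and of why the pairwise-obtained partite sets all coincide is slightly more careful than the paper's write-up, but the underlying argument is identical.
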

\begin{proof}
To show part (1),
suppose that $S$ is not precisely composed of one non-sink vertex and sink vertices.
By the hypothesis that $S$ is an anti-$\{1,2\}$-competing set, no two vertices in $S$ are adjacent.
If $S$ contains no sink,
then there exists a partite set $X'$ of $D$ such that $\bigcup_{u \in S}N^+(u) \subseteq X'$ by Proposition~\ref{prop:same-partite}(3).
Suppose that $S$ contains a sink.
Then there exist at least two non-sink vertices $x$ and $y$ in $S$ and so, by Proposition~\ref{prop:same-partite}(3), $N^+(x) \cup N^+(y) $ is contained in a partite set $X'$ of $D$.
Since $x$ and $y$ were arbitrarily chosen from $S$,
each non-sink vertex in $S$ has an out-neighborhood in $X'$.
Thus we have shown that part (1) is true.

To show the part (2),
we assume $|S| \geq 3$.
Since no two vertices in $S$ are adjacent in $C_{1,2}(D)$,
each vertex in $V(D) - X$ has at least $|S|-1$ out-neighbors in $S$.
Since $|S| \geq 3$, $2(|S|-1) > |S|$ and so, by the Pigeonhole principle, there exists a common out-neighbor in $S$ of  each pair of vertices in $V(D)- X$.
Therefore the part (2) is true.
\end{proof}

From now on, when we mention a $k$-partite tournament $D$ with a non-$\{1,2\}$-competing partite set $X_1$ for some $k \geq 3$, we assume that $X_2,\ldots,X_k$ are the remaining partite sets of $D$.
If $D$ has a sink in a partite set distinct from $X_1$,
then the sink is a common out-neighbor of each vertex in $X_1$ and so $X_1$ is a clique, which is a contradiction.
Thus 
\statement{sta:thm:sink-hypothesis}{if $D$ has a sink $u$, then $u$ is contained in $X_1$.}
\begin{Prop} \label{prop:distinct-partitesets}
Let $D$ be a loose $k$-partite tournament $D$ and $X_1$ be a non-$\{1,2\}$-competing partite set for some $k \geq 3$.
If a vertex in $X_1$ has out-neighbors in distinct partite sets, then the vertex is adjacent to all the non-sink vertices in $C_{1,2}(D)$.
\end{Prop}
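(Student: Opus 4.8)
The plan is to fix the vertex $v\in X_1$ whose out-neighbourhood $N^+(v)$ meets at least two partite sets, and to prove $v\sim w$ for every non-sink vertex $w\neq v$, splitting on whether $w$ lies in $X_1$ or not. Observe first that $v$ is itself a non-sink and that $|N^+(v)|\geq 2$. If $w\in X_1$, then $v$ and $w$ are two non-sink vertices of the same partite set, so Proposition~\ref{prop:same-partite}(3) applies: since $N^+(v)\subseteq N^+(v)\cup N^+(w)$ already meets two distinct partite sets, the union $N^+(v)\cup N^+(w)$ is not contained in a single partite set, the non-adjacency criterion of Proposition~\ref{prop:same-partite}(3) fails, and hence $v\sim w$.

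For $w\notin X_1$, I would verify the three conditions of Corollary~\ref{Cor:cor-adajcent} for the pair $v,w$. Condition~(1), that $w$ is not the only out-neighbour of $v$, is immediate from $|N^+(v)|\geq 2$, and the half of condition~(3) that quantifies over partite sets containing $N^+(v)$ is vacuous because $N^+(v)$ lies in no single partite set. Thus the only substantive obligations are condition~(2), that $N^+(w)\neq\{v\}$, and the remaining half of condition~(3), that $N^+(v)\not\subseteq X\cup\{w\}$ for every partite set $X$ with $N^+(w)\subseteq X$.

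The crux is to rule out the failure of these two obligations by playing any such failure back against the hypothesis that $X_1$ is non-$\{1,2\}$-competing. Suppose an obligation fails. If $N^+(w)=\{v\}$, then trivially $N^+(w)\cap(X_1\setminus\{v\})=\emptyset$. If instead $N^+(w)\subseteq X$ and $N^+(v)\subseteq X\cup\{w\}$ for some partite set $X$, then, because $N^+(v)$ meets two partite sets, one forces $w\in N^+(v)$ and $\emptyset\neq N^+(v)\setminus\{w\}\subseteq X$; since $N^+(v)\cap X_1=\emptyset$ this gives $X\neq X_1$, and again $N^+(w)\cap(X_1\setminus\{v\})=\emptyset$. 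So in either case no vertex of $X_1\setminus\{v\}$ is an out-neighbour of $w$, whence for every $z\in X_1\setminus\{v\}$ the necessarily present arc between $z$ and $w$ must be directed $z\to w$. Consequently each such $z$ is a non-sink, any two of them compete through the common out-neighbour $w$, and each is adjacent to $v$ by the same-partite argument of the first paragraph; hence every pair of vertices of $X_1$ is adjacent, i.e.\ $X_1$ is $\{1,2\}$-competing. This contradicts the hypothesis, so both obligations hold and Corollary~\ref{Cor:cor-adajcent} yields $v\sim w$.

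The step I expect to be the main obstacle is exactly this last reduction. Corollary~\ref{Cor:cor-adajcent} can genuinely fail for an isolated bad vertex $w$ (for instance when $w$ essentially only points back at $v$), so the argument cannot stay purely local to the pair $v,w$. The idea that makes it work is that any such local failure of the adjacency criterion forces all of $X_1\setminus\{v\}$ to point at $w$, turning $X_1$ into a clique of $C_{1,2}(D)$, which is precisely what the non-$\{1,2\}$-competing hypothesis on $X_1$ forbids.
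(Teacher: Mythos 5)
Your proof is correct, but it takes a genuinely different route from the paper's. The paper splits on whether $D$ has a sink: if it does, statement \eqref{sta:thm:sink-hypothesis} makes the sink an out-neighbour of every vertex outside $X_1$; if it does not, it extracts an anti-$\{1,2\}$-competing set $S\subseteq X_1$ with $|S|\geq 2$, applies Proposition~\ref{Prop:sub-digraph}(1) to see that the distinguished vertex lies outside $S$, and uses the fact that each $w\notin X_1$ can beat at most one vertex of $S$ to produce an explicit out-neighbour of $w$ in $X_1$ distinct from $v$ — which is exactly the witness needed to verify conditions (2) and (3) of Corollary~\ref{Cor:cor-adajcent}. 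You instead run a single uniform contrapositive with no case split on sinks: if either obligation of Corollary~\ref{Cor:cor-adajcent} fails for the pair $(v,w)$, then $N^+(w)$ misses $X_1\setminus\{v\}$ entirely, so every vertex of $X_1\setminus\{v\}$ dominates $w$, and $X_1$ collapses into a clique of $C_{1,2}(D)$, contradicting the hypothesis that $X_1$ is non-$\{1,2\}$-competing. Your version is more self-contained (it needs neither \eqref{sta:thm:sink-hypothesis} nor Proposition~\ref{Prop:sub-digraph}) and slightly shorter; the paper's version has the advantage of exhibiting a concrete common out-neighbour and of reusing the sink/anti-competing-set machinery that the subsequent structural results depend on anyway. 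All the delicate points in your argument check out: in your Case B the containment $N^+(v)\subseteq X\cup\{w\}$ together with $N^+(v)$ meeting two partite sets and $N^+(v)\cap X_1=\emptyset$ does force $X\neq X_1$, and the vertices of $X_1\setminus\{v\}$ are indeed non-sinks once they are shown to dominate $w$, so Proposition~\ref{prop:same-partite} applies to pair each of them with $v$.
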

\begin{proof}
{\it Case 1}. $D$ has a sink $u$.
Then, by \eqref{sta:thm:sink-hypothesis}, \begin{equation}\label{eq:thm:out-degree-0-charac}
N^-(u)=V(D) - X_1.
\end{equation}
 Suppose that a vertex $v$ in $X_1$ has two out-neighbors $w$ and $x$ belonging to distinct partite sets. Then we take a non-sink vertex $y$. If $y \in V(D)- X_1$, then $u$ is an out-neighbor of $y$ and so, by Corollary~\ref{Cor:cor-adajcent}, $v$ and $y$ are adjacent. If $y \in X_1$, by Proposition~\ref{prop:same-partite}(3),  $v \sim y$ since $N^+(v) \not \subseteq X$ for any partite set $X$ of $D$.

{\it Case 2}. $D$ has no sinks.
Since $X_1$ is a non-$\{1,2\}$-competing partite set,
 we may take an anti-$\{1,2\}$-competing set $S$ included in $X_1$ with $|S|\geq 2$.
Suppose that a vertex $u$ in $X_1$ has two out-neighbors belonging to distinct partite sets.
 Then $u \notin S$ by Proposition~\ref{Prop:sub-digraph}(1).
Moreover, $u \sim X_1$ by Proposition~\ref{prop:same-partite}(2).
Since $|S| \geq 2$, each vertex in $V(D) - X_1$ has at least one out-neighbor in $S$.
Therefore $u \sim (V(D)-X_1)$ by  Proposition~\ref{Prop:different-partite-adjacent}.
\end{proof}
According to the above proposition, in order to characterize $C_{1,2}(D)$ for a multipartite tournament $D$ with a non-$\{1,2\}$-competing partite set $X_1$, we only need to take a look at a vertex in $X_1$ with all of its out-neighbors in the same partite set.

By Proposition~\ref{Prop:sub-digraph}(1), we may assume the following:
\statement{sta:thm:competing_set_hypothesis}{if a $k$-partite tournament has an anti-$\{1,2\}$-competing set $S \subseteq X_1$ that contains at least two non-sink vertices, then 
each non-sink vertex in $S$ has an out-neighborhood in $X_2$.}

\begin{Thm} \label{thm:new-neighbor-stable-size2}
   Let $D$ be a loose $k$-partite tournament and $X_1$ be a non-$\{1,2\}$-competing partite set in $D$.
    Then the following parts are valid:
    \begin{enumerate}[{(1)}]
    \item
        for $2\leq i \leq k$,
        $\{v:\emptyset \neq N^+(v)\subseteq X_i\}\subseteq X_1$;
\item $\bigcup_{i=3}^k X_i$ forms a clique in $C_{1,2}(D)$ and
$\bigcup_{i=3}^k X_i \sim X_2$;
\item $\{v : \emptyset \neq N^+(v) \subseteq X_i\} \sim \big(\{v : \emptyset \neq N^+(v) \subseteq X_j\} \cup X_j\big)$ for distinct integers $2\leq i, j \leq k$
\item given a non-sink vertex $x$ with $N^+(x)\subseteq X_i$ and a vertex  $y$ in $X_i$ for some $i \in \{2,\ldots,k\}$,
$x\not\sim y$ if and only if $x \stackrel{*}{\to} y $ or
    $y \stackrel{*}{\to} x $;

  \item if $D$ has a sink or an anti-$\{1,2\}$-competing set of size at least three in $X_1$, then $V(D)-X_1$ forms a clique in $C_{1,2}(D)$.
    \end{enumerate}
    \end{Thm}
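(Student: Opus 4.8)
The plan is to write $A_i:=\{v:\emptyset\neq N^+(v)\subseteq X_i\}$ for $2\le i\le k$ and to fix throughout an anti-$\{1,2\}$-competing set $S\subseteq X_1$ with $|S|\ge 2$, which exists because $X_1$ is non-$\{1,2\}$-competing. The observation I will use everywhere is that each $z\in V(D)-X_1$ has at least $|S|-1$ out-neighbours in $S$: the vertex $z$ is joined by an arc to every vertex of $S\subseteq X_1$, and two in-neighbours of $z$ in $S$ would share the out-neighbour $z$ and hence be adjacent, contradicting that $S$ is anti-$\{1,2\}$-competing. This is exactly the counting used for Proposition~\ref{Prop:sub-digraph}(2). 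I will also split on whether $D$ has a sink (necessarily in $X_1$ by \eqref{sta:thm:sink-hypothesis}); when it has none, \eqref{sta:thm:competing_set_hypothesis} lets me fix a non-sink anti-$\{1,2\}$-competing pair $\{a,b\}\subseteq X_1$ with $N^+(a),N^+(b)\subseteq X_2$ and, by Proposition~\ref{prop:same-partite}(1), $N^+(a)\cap N^+(b)=\emptyset$. I prove the parts in the order (1), (5), (2), (3), (4).

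Parts (1), (5), (2) are short. For (1), a vertex $v\notin X_1$ with $\emptyset\neq N^+(v)\subseteq X_i$ lies in $V(D)-X_1$ and so has an out-neighbour in $S\subseteq X_1$, contradicting $N^+(v)\subseteq X_i$; hence $v\in X_1$. For (5), if $D$ has a sink $u$ then $N^-(u)=V(D)-X_1$, so $u$ is a common out-neighbour of every pair in $V(D)-X_1$; if instead $|S|\ge 3$, then Proposition~\ref{Prop:sub-digraph}(2) supplies a common out-neighbour in $S$ for each such pair; either way $V(D)-X_1$ is a clique. For (2), if $D$ has a sink or $|S|\ge 3$ then (5) already makes $V(D)-X_1$ a clique; otherwise $D$ has no sink and I use $\{a,b\}$. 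Since $N^+(a)\subseteq X_2$, every $u\in\bigcup_{i=3}^k X_i$ satisfies $a\not\to u$, hence $u\to a$, so $a$ is a common out-neighbour of any two vertices of $\bigcup_{i=3}^k X_i$. For $\bigcup_{i=3}^k X_i\sim X_2$, take $u\in\bigcup_{i=3}^k X_i$ and $w\in X_2$; because $N^+(a)\cap N^+(b)=\emptyset$, one of $a,b$, say $a$, has $w\to a$, and with $u\to a$ this gives the common out-neighbour $a$.

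Parts (3) and (4) rest on Proposition~\ref{Prop:different-partite-adjacent}, and (1) already puts $A_i,A_j\subseteq X_1$. For (3) with $y\in A_j$, the vertices $x\in A_i$ and $y$ both lie in $X_1$ and $N^+(x)\cup N^+(y)$ meets both $X_i$ and $X_j$, so it is contained in no partite set and Proposition~\ref{prop:same-partite}(2) forces $x\sim_{1,2}y$. For $y\in X_j$ I note $x\not\to y$ (since $y\notin X_i$), so $y\to x$, and apply Proposition~\ref{Prop:different-partite-adjacent} to the arc $(y,x)$. Part (4) is the same mechanism in a cleaner setting: $x\in A_i\subseteq X_1$, while $y\in X_i$ is non-sink by \eqref{sta:thm:sink-hypothesis}; the reverse implication is immediate, and for the forward one I split on the direction of the arc and observe that the second alternative of Proposition~\ref{Prop:different-partite-adjacent} collapses—if $N^+(x)\subseteq X$ for a partite set $X$ then $X=X_i$, whereupon $N^+(y)\subseteq X_i\cup\{x\}$ with $N^+(y)\cap X_i=\emptyset$ forces $N^+(y)=\{x\}$, i.e.\ $y\stackrel{*}{\to}x$ (and symmetrically $x\stackrel{*}{\to}y$ when $x\to y$).

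The hard part is excluding the two non-adjacency alternatives of Proposition~\ref{Prop:different-partite-adjacent} for $y\in X_j$ in part (3): namely $y\stackrel{*}{\to}x$, and $N^+(y)\subseteq X\cup\{x\}$ for a partite set $X\supseteq N^+(x)$. Both force $y$ to have at most one out-neighbour in $X_1$, namely $x$ (using $X=X_i$ and $X_i\cap X_1=\emptyset$); with the ``$\ge|S|-1$'' bound this gives $|S|\le 2$, and after the sink/no-sink split it identifies $x$ with a member of the anti-competing pair, say $x=a$, so that $N^+(x)=N^+(a)\subseteq X_2$ and $i=2$. The contradiction then comes from $b$: as $a$ is the unique out-neighbour of $y$ in $\{a,b\}$, we get $b\to y$, i.e.\ $y\in N^+(b)\subseteq X_2$, impossible since $y\in X_j$ with $j\neq 2$. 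The sink case is identical via $y\to u$ for the sink $u\in X_1$, which satisfies $u\neq x$ and $u\notin X_i\cup\{x\}$. This rules out both alternatives, yields $x\sim y$, and completes (3).
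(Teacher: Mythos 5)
Your proposal is correct and takes essentially the same approach as the paper: the same pigeonhole bound that each vertex outside $X_1$ has at least $|S|-1$ out-neighbours in $S$ (Proposition~\ref{Prop:sub-digraph}(2)), the same sink/no-sink split resting on \eqref{sta:thm:sink-hypothesis} and \eqref{sta:thm:competing_set_hypothesis}, and the same adjacency criteria. Your handling of parts (3) and (4) by refuting the two non-adjacency alternatives of Proposition~\ref{Prop:different-partite-adjacent} is merely the contrapositive of the paper's direct verification of Corollary~\ref{Cor:cor-adajcent}, whose key step there is likewise producing an out-neighbour of the vertex in $X_j$ inside $X_1$ distinct from the given vertex of $F_i$, via a sink or a second element of $S\subseteq F_2$.
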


    \begin{proof}

     We first show the parts (1) and (2).
Let $S$ be an anti-$\{1,2\}$-competing set with size at least two in $X_1$. Then
each vertex in $\bigcup_{i=2}^k X_i$ has an out-neighbor in $S \subseteq X_1$.
Therefore $\{v:\emptyset \neq N^+(v) \subseteq X_i\}\cap \bigcup_{i=2}^k X_i  =\emptyset$ for any $2\leq i\leq k$ and so
the part (1) is true.

For simplicity,
let $F_i=\{v: \emptyset \neq N^+(v) \subseteq X_i\}$ for each $2\leq i \leq k$.
If $S$ contains a sink $u$, then $u$ is an out-neighbor of each vertex in $\bigcup_{i=2}^k X_i$ by \eqref{sta:thm:sink-hypothesis} and so $\bigcup_{i=2}^k X_i$ forms a clique in $C_{1,2}(D)$.
Suppose that $S$ does not contain a sink.
Then $S\subseteq F_2$ by \eqref{sta:thm:competing_set_hypothesis}.
Therefore
the out-neighborhood of each vertex in $S$ is included in $X_2$.
Thus each vertex in $S$ is an out-neighbor of each vertex in $\bigcup_{i=3}^k X_i$.
Hence $\bigcup_{i=3}^k X_i$ forms a clique in $C_{1,2}(D)$.
In addition, since each vertex in $X_2$ has an out-neighbor in $S \subseteq X_1$, $\left(\bigcup_{i=3}^k X_i\right) \sim X_2$ and so the part (2) is true.

To show the part (3), take two vertices $v\in F_i$ and $x \in  X_j \cup F_j $ for distinct $i$ and $j$ in $\{2,\ldots,k\}$.
    If $x \in F_j$, then $\{v,x\} \subseteq X_1$ by the part (1) and so, by Proposition~\ref{prop:same-partite}(2), $v \sim_{1,2} x$ and so $v \sim x$.
    Suppose $ x\in X_j$.
        Then $x$ is not an out-neighbor of $v$.
      Therefore $v$ is an out-neighbor of $x$.
      Moreover, since $v\in F_i$,
      $v$ has an out-neighbor distinct from $x$.
      Thus, by Corollary~\ref{Cor:cor-adajcent}, it is suffices to show that $x$ has an out-neighbor distinct from $v$.
      If $S$ has a sink $u$, then $u$ is an out-neighbor of $x$ distinct from $v$ and we are done.
      Therefore we assume that $S$ has no sinks.

        Suppose $j \neq 2$. Then, since $S\subseteq F_2$ by \eqref{sta:thm:competing_set_hypothesis}, each vertex in $S$ cannot have $x\in X_j$ as an out-neighbor.
         Therefore $S \subseteq N^+(x)$. 
         Thus $x$ has at least two out-neighbors in $S \subseteq X_1$ since $|S| \geq 2$.
        Hence $x$ has an out-neighbor in $X_1$ distinct from $v$.
        Now we suppose $j =2$. Then $i\neq2$.
        Since $|S|\geq 2$,
       $x$ has at least one out-neighbor $x'$ in $S$.
       Since $S \subseteq F_2$ by the assumption,
       $x'$ is distinct from $v$.
        In each case, $x$ has an out-neighbor in $X_1$ distinct from $v$.
        Therefore
        $v$ and $x$ are adjacent by Corollary~\ref{Cor:cor-adajcent}.
   Since $v$ and $x$ were arbitrarily chosen from $F_i$ and $X_j$, respectively, the part (3) is true.

The ``if" part of the part (4) is true by Proposition~\ref{Prop:different-partite-adjacent}.
To show the ``only if" part of the part (4),
suppose, to the contrary, that there exist a vertex $x$ in $F_i$ and a vertex $y$ in $X_i$ for some $i \in \{2,\ldots,k\}$ such that they are not adjacent, $N^+(x) \neq \{y\}$, and $N^+(y) \neq \{x\}$.
Then, since $x$ and $y$ are non-sink vertices,
$N^+(x) - \{y\} \neq \emptyset$ and
$N^+(y) - \{x\} \neq \emptyset$.
Since $y \in X_i$,
$(N^+(y) - \{x\}) \cap X_j \neq \emptyset$ for some $j$ distinct from $i$.
Thus $N^+(y) \not \subseteq X_i \cup \{x\}$.
Therefore $x\sim y$ by Corollary~\ref{Cor:cor-adajcent}, a contradiction.

The part (5) is an immediate consequence of \eqref{eq:thm:out-degree-0-charac} and
Proposition~\ref{Prop:sub-digraph}(2).\end{proof}

Proposition~\ref{prop:distinct-partitesets} and Theorem~\ref{thm:new-neighbor-stable-size2} may be summarized as follows.

\begin{Thm}\label{thm:structure}
Let $D$ be a loose multipartite tournament, $X_1$ be a non-$\{1,2\}$-competing partite set, and $U$ be the set of sinks in $D$ ($U$ is possibly vacuous).
Then the adjacency matrix of $C_{1,2}(D)$ is in the form given in Figure~\ref{fig:structure}.	
\end{Thm}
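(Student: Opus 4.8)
The plan is to assemble Theorem~\ref{thm:structure} as a direct bookkeeping consequence of the results already proved, namely Proposition~\ref{prop:distinct-partitesets} together with all five parts of Theorem~\ref{thm:new-neighbor-stable-size2}. Since the theorem asserts that the adjacency matrix of $C_{1,2}(D)$ has a specific block form, the proof is really a matter of partitioning the vertex set into the blocks indexed in Figure~\ref{fig:structure} and verifying, block by block, that the adjacency pattern is forced by the earlier lemmas. First I would fix the partition of $V(D)$. By Proposition~\ref{prop:distinct-partitesets}, every vertex of $X_1$ whose out-neighbors span at least two partite sets is adjacent to all non-sink vertices, so these vertices form a ``universal'' block (universal among non-sinks). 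The remaining vertices of $X_1$ either are sinks (collected in $U$, which lies in $X_1$ by \eqref{sta:thm:sink-hypothesis}) or have their entire out-neighborhood inside a single $X_i$; by Theorem~\ref{thm:new-neighbor-stable-size2}(1) the latter vertices constitute the sets $F_i=\{v:\emptyset\neq N^+(v)\subseteq X_i\}$ for $2\le i\le k$, and these are contained in $X_1$. Thus $V(D)$ decomposes as $U$, the universal block, the blocks $F_2,\dots,F_k$, and the blocks $X_2,\dots,X_k$ (with $X_2$ playing a distinguished role).

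Next I would fill in the off-diagonal and diagonal blocks using the remaining parts. Theorem~\ref{thm:new-neighbor-stable-size2}(2) gives that $\bigcup_{i=3}^k X_i$ is a clique and is completely joined to $X_2$; part (3) gives complete adjacency between $F_i$ and $X_j\cup F_j$ for distinct $i,j$; part (5) gives that $V(D)-X_1$ is a clique whenever $D$ has a sink or an anti-$\{1,2\}$-competing set of size at least three, which is exactly the regime in which the corresponding block collapses to an all-ones block in Figure~\ref{fig:structure}. The only genuinely non-trivial block is the interaction \emph{within} each pair $(F_i,X_i)$, where adjacency is not uniform: Theorem~\ref{thm:new-neighbor-stable-size2}(4) characterizes the non-edges there precisely as the pairs with $x\stackrel{*}{\to}y$ or $y\stackrel{*}{\to}x$, so this block is recorded in the figure as a partially-specified pattern rather than a solid block. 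I would also note that the sink block $U$ is completely joined to $V(D)-X_1$, since by \eqref{eq:thm:out-degree-0-charac} a sink is a common out-neighbor of all such vertices, and $U$ is a stable set joined to nothing inside $X_1$ by Proposition~\ref{prop:same-partite}(3).

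Finally I would verify the diagonal blocks: each $F_i$ is internally governed by Proposition~\ref{prop:same-partite}, and $X_1$'s internal non-edges come from the anti-$\{1,2\}$-competing structure, so the theorem is proved simply by checking that every block of Figure~\ref{fig:structure} matches one of these cited conclusions. The main obstacle, such as it is, is not mathematical depth but organizational completeness: one must confirm that the four block-types $U$, the universal block, $\bigcup F_i$, and $\bigcup X_i$ are mutually exhaustive and that each ordered pair of blocks is covered by exactly one cited statement, with careful attention to the case split (sink present versus absent, and $|S|\ge 3$ versus $|S|=2$) that determines whether the $V(D)-X_1$ region is a full clique or retains the finer $(F_i,X_i)$ pattern from part (4). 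Once that accounting is laid out, the proof reduces to the single sentence that the adjacency matrix assembled from these blocks is exactly the one depicted in Figure~\ref{fig:structure}.
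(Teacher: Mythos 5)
Your overall route is the same as the paper's: Theorem~\ref{thm:structure} is indeed presented there as nothing more than a block-by-block summary of Proposition~\ref{prop:distinct-partitesets} and Theorem~\ref{thm:new-neighbor-stable-size2} (together with \eqref{sta:thm:sink-hypothesis} and Proposition~\ref{prop:same-partite}), and your partition of $V(D)$ into $U$, the universal block $X_1^*$, the sets $F_2,\dots,F_k$, and $X_2,\dots,X_k$, with part (4) supplying the non-uniform $(F_i,X_i)$ pattern and part (5) supplying the $M=J-I$ regime, is exactly the intended bookkeeping.

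However, there is one concrete error in your accounting of the blocks involving $U$. You assert that ``the sink block $U$ is completely joined to $V(D)-X_1$, since by \eqref{eq:thm:out-degree-0-charac} a sink is a common out-neighbor of all such vertices.'' This conflates two different things. A sink $u$ has $d^+(u)=0$, so $d_{D-v}(u,w)=\infty$ for every $w$; hence $u$ can neither compete nor $(1,2)$-compete with any vertex, and every sink is an \emph{isolated} vertex of $C_{1,2}(D)$ (the paper states this explicitly just before Theorem~\ref{thm:Type-AorB-distance}, and Figure~\ref{fig:structure} accordingly has all blocks in the $U$ row equal to $O$). What \eqref{eq:thm:out-degree-0-charac} actually yields is that the sink is a common out-neighbor of every \emph{pair} of vertices in $V(D)-X_1$, so that $V(D)-X_1$ forms a clique --- this is the content of Theorem~\ref{thm:new-neighbor-stable-size2}(5) and the reason $M=J-I$ when a sink exists; it says nothing about edges incident to the sink itself. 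As written, your assembled matrix would have $J$ blocks in the $U$ row where Figure~\ref{fig:structure} has $O$, so this step must be corrected before the proof establishes the stated form.
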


\begin{figure}
\[
\begin{blockarray}{ccccccc|cccc}
& U &  F_2 & F_3 & \cdots  & F_k & X^*_1 & X_2 & X_3 & \cdots & X_k \\
\begin{block}{c(cccccc|cccc)} 
	U& {O} & {O} & O & \cdots & {O} & {O}&  {O} & {O} & \cdots & {O} \\
    F_2& {O} & {\star} & J& {\cdots} & {J} & {J} & {\star}  & J & {\cdots} & {J} \\
    F_3& {O} & {J} & \star & {\cdots} & {J} & {J} & {J} & \star & {\cdots} & {J} \\
  {\vdots} &    {\vdots} & {\vdots}  & \vdots & {\ddots}  & {\vdots}  & {\vdots}  & {\vdots} & \vdots &{\ddots} & {\vdots} \\
   F_k&  {O} & {J} & {J} & {\cdots} & {\star} & {J} & {J} & {J} & {\cdots} & {\star} \\
 X^*_1 &    {O} & {J} & {J} & {\cdots} & {J} & {J-I} & {J} & {J} & {\cdots} & {J} \\ 
 \cmidrule{1-11}
  X_2&   {O} & {\star} & {J} & {\cdots} & {J} & {J} & {M} & {J} & {\cdots} & {J} \\
  X_3& {O} & {J} & \star & {\cdots} & {J} & {J} & {J} & J-I & {\cdots} & {J} \\
 {\vdots} &    {\vdots} & {\vdots}  & \vdots & {\ddots}  & {\vdots}  & {\vdots}  & {\vdots} & \vdots &{\ddots} & {\vdots} \\
  X_k &   {O} & {J}& {J} & {\cdots} & {\star} & {J} & {J} &J  & {\cdots} & {J-I} \\
\end{block}
\end{blockarray}
\]
\caption
{The adjacency matrix $A$ of $C_{1,2}(D)$ for a multipartite tournament $D$ with a non-$\{1,2\}$-competing partite set $X_1$ where $U$ is the set of sinks in $D$ ($U$ is possibly vacuous); $F_i$, $O$, $J$, and $I$ stand for $\{v: \emptyset \neq N^+(v) \subseteq X_i\}$, a zero matrix, a matrix of all $1$'s, and an identity matrix, respectively; $X^*_1=X_1- \left(\bigcup_{i=2}^kF_i \cup U  \right)$; $M$ is undetermined, yet, if $D$ has a sink or an anti-$\{1,2\}$-competing set of size at least three in a partite set, then $M=J-I$; Blocks marked with $\star$ depend on $D$.}
\label{fig:structure}
\end{figure}

A set $S$ of vertices in a graph $G$ is called a {\it dominating set} if every vertex $v \in V$ is either an element of $S$ or is adjacent to an element of $S$.
The {\it domination number} $\gamma(G)$ of a graph $G$ equals the minimum cardinality of a dominating set in $G$.

Given a digraph $D$,
each sink in $D$ is isolated in $C_{1,2}(D)$ and so $m \leq \gamma(C_{1,2}(D))$ where $m$ is the number of sinks in $D$. The following theorem gives upper bound for the domination number of $C_{1,2}(D)$ when $D$ is a loose multipartite tournament.

\begin{Thm} \label{thm:Type-AorB-distance}
Let $D$ be a loose multipartite tournament.
Then the following are true:
 \begin{enumerate}[{(1)}]
 \item each component of $C_{1,2}(D)$ has diameter of at most two;
 \item $D$ has no sinks if and only if $C_{1,2}(D)$ is connected;

 \item each stable set of $C_{1,2}(D)$ is contained in at most two partite sets of $D$;

\item $D$ has at most two non-$\{1,2\}$-competing partite sets;

\item if $D$ has $m$ sinks, then $\gamma(C_{1,2}(D)) \in \{m+1,m+2\}$ unless $C_{1,2}(D)$ is an empty graph.
 \end{enumerate}
\end{Thm}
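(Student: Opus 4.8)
The plan is to read all five parts off the block adjacency matrix of Figure~\ref{fig:structure} furnished by Theorem~\ref{thm:structure}. The qualitative features I would extract first are: the sinks $U$ are isolated and all lie in $X_1$ by \eqref{sta:thm:sink-hypothesis}; the blocks $X^*_1,X_3,\dots,X_k$ are cliques (their diagonal blocks are $J-I$); and every off-diagonal block among $F_2,\dots,F_k,X^*_1,X_2,\dots,X_k$ equals $J$ except the blocks $F_i\times F_i$, $F_i\times X_i$, and $X_2\times X_2$, which are the only places a non-edge between two non-sink vertices can occur. The single recurring fact I would exploit is that, since $k\ge 3$ and every partite set is nonempty, for each $i\in\{2,\dots,k\}$ there is another index $j\in\{2,\dots,k\}\setminus\{i\}$ with $X_j\ne\emptyset$; in particular $X_2\times X_3=J$ with both sets nonempty, so $C_{1,2}(D)$ always has an edge and is never the empty graph.

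For parts (1) and (2) I would show that any two non-sink vertices lie at distance at most two. If they are adjacent we are done; otherwise, by the remark above, they sit in a common $\star$ or $M$ block, i.e.\ both in some $F_i$, or one in $F_i$ and the other in $X_i$, or both in $X_2$. In each case I pick a vertex $w$ in a partite set $X_j$ with $j\in\{2,\dots,k\}\setminus\{i\}$ (respectively $j\ge 3$), which exists because $k\ge 3$; the relevant cross-blocks are $J$, so $w$ is adjacent to both vertices and is distinct from them, giving a common neighbour. Hence the non-sink vertices form one component of diameter at most two while each sink is isolated, which is part (1); and since a non-sink vertex exists, $C_{1,2}(D)$ is connected precisely when $U=\emptyset$, which is part (2). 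Part (4) is then immediate: $X_3,\dots,X_k$ are cliques and hence $\{1,2\}$-competing, so the only possible non-$\{1,2\}$-competing partite sets are $X_1$ and $X_2$.

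Part (3) I would argue by locating the non-sink vertices of a stable set $T$. A vertex of $X^*_1$ is adjacent to every non-sink vertex, so if $T$ meets $X^*_1$ its only non-sink vertex lies in $X^*_1\subseteq X_1$. If $T$ contains a vertex $w\in F_i$, then $w$ is adjacent to everything outside $F_i\cup X_i$, so all other non-sink vertices of $T$ lie in $F_i\cup X_i\subseteq X_1\cup X_i$. Otherwise the non-sink part of $T$ lies in $\bigcup_{i\ge 2}X_i$, and since distinct $X_i,X_j$ are completely joined, it lies in a single $X_i$. As all sinks lie in $X_1$, in every case $T$ meets at most two partite sets. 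For part (5), the isolated sinks force $\gamma(C_{1,2}(D))=m+\gamma(H)$ with $H$ the non-sink component, so $\gamma(H)\ge 1$; I would then exhibit the pair $\{a,b\}$ with $a\in X_2$ and $b\in X_3$, checking against the $J$-blocks that $a$ dominates $F_3,\dots,F_k,X^*_1,X_3,\dots,X_k$ and $b$ dominates $F_2,X_2$, so together they dominate $H$ and $\gamma(H)\le 2$. Since $C_{1,2}(D)$ has an edge, the empty-graph exception does not arise and $\gamma(C_{1,2}(D))\in\{m+1,m+2\}$. I expect the main obstacle to be the bookkeeping in parts (3) and (5): verifying that the case split on where a stable set or a dominating pair sits relative to the $\star$ and $M$ blocks is exhaustive, and that the chosen pair $\{a,b\}$ actually reaches every block — which is exactly where the hypothesis $k\ge 3$ is invoked repeatedly.
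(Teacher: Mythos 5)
Your proposal is correct and follows essentially the same route as the paper: everything is read off the block adjacency matrix of Figure~\ref{fig:structure} supplied by Theorem~\ref{thm:structure}, including the same dominating pair $\{a,b\}$ with $a\in X_2$, $b\in X_3$ for part (5). You merely make explicit the details the paper dismisses as ``easy to see'' (the common-neighbour argument for diameter two, the exhaustive location of non-edges in the $\star$ and $M$ blocks, and the observation that $X_2\sim X_3$ forces $C_{1,2}(D)$ to be nonempty).
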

\begin{proof}
By Theorem~\ref{thm:structure}, the adjacency matrix $A$ of $C_{1,2}(D)$  is in the form of the matrix given in Figure~\ref{fig:structure}.
Therefore it is easy to see from $A$ that any pair of vertices of each component is at distance at most two and so the part (1) is true.
Moreover, $C_{1,2}(D)$ is connected if and only if $U = \emptyset$, which can be seen from $A$.
Recall that $U$ is the set of sinks in $D$.
Therefore $D$ has no sinks if only if $U= \emptyset$.
Thus the part (2) is true.
Now we prove the part (3).
Since $D$ has a non-$\{1,2\}$-competing partite set,
 a stable set $S$ of $C_{1,2}(D)$ intersects with at most one partite set among $X_2,\ldots, X_k$ by Theorem~\ref{thm:new-neighbor-stable-size2}(2).
Therefore $S$ intersects with at most two partite sets of $D$.
By the structure of $A$, $X_1$ and $X_2$ are the only possible non-$\{1,2\}$-competing partite sets of $D$. 
Thus the part (4) is true.

To show part (5), suppose that $D$ has $m$ sinks and $C_{1,2}(D)$ is not an empty graph.
Then $\gamma(C_{1,2}(D)) \geq m+1$. 
Since $D$ is a loose multipartite tournament, by Theorem~\ref{thm:structure}, the adjacency matrix $A$ of $C_{1,2}(D)$ is in the form given in Figure~\ref{fig:structure}.
We take a vertex $u$ in $X_2$ and a vertex $v$ in $X_3$.
By the structure of $A$, any vertex except sinks is adjacent to $u$ or $v$ in $C_{1,2}(D)$.
Therefore $\gamma(C_{1,2}(D))\leq m+2$.
\end{proof}

Motivated by Theorem~\ref{thm:Type-AorB-distance}(3),
we may ask the question
``Given a multipartite tournament $D$, what is the size of a biggest set among the anti-$\{1,2\}$-competing sets that are not included in any partite set of $D$?".
If a multipartite tournament $D$ has no sinks, then a biggest set among such anti-$\{1,2\}$-competing sets of $D$ has the size at most four, which is told by Theorem~\ref{different-partite-stable-set-at most four}.

To justify Theorem~\ref{different-partite-stable-set-at most four}, we need the following lemma.

\begin{Lem}[\cite{choi202412step}]\label{lem:cycle-adjacent}
Let $D$ be a digraph having a directed cycle $C$ of order $l$ for some $l \in \{3,4\}$ and $X$ be a subset of $V(D)$.
Suppose that each vertex $u$ in $X - V(C)$ has two out-neighbors $u_1$ and $u_2$ on $C$ such that both $(u_1,u_2)$-section and $(u_2,u_1)$-section of $C$ have length at most $2$.
Then $(X-V(C)) \sim X$. 
\end{Lem}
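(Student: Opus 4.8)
The plan is to isolate a single structural fact about how $u$ reaches the cycle $C$, and then to split on whether the second vertex lies on $C$. Fix the directed cycle $C=c_0\to c_1\to\cdots\to c_{l-1}\to c_0$ with $l\in\{3,4\}$, let $u\in X-V(C)$, and let $u_1,u_2$ be the guaranteed out-neighbors of $u$ on $C$ whose two $C$-sections both have length at most $2$. Since the two section lengths sum to $l$, this forces lengths $(1,2)$ when $l=3$ and $(2,2)$ when $l=4$; in particular, when $l=4$ the anchors $u_1,u_2$ are antipodal on $C$. The \emph{key observation} I would record first is the covering property: every vertex of $C$ is either $u_1$ or $u_2$, or the $C$-successor of $u_1$ or of $u_2$. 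This is immediate from the section-length dichotomy, and it says exactly that $u$ reaches every vertex $v$ of $C$ by a directed walk $u\to u_i\to v$ of length at most $2$ with $u_i\in\{u_1,u_2\}$, whose first step is a genuine arc of $D$.

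To prove $(X-V(C))\sim X$ I take $u\in X-V(C)$ and $y\in X-\{u\}$ and show $u\sim y$, distinguishing two cases. \textbf{Case A: $y\in X-V(C)$.} Here $y$ also has an out-neighbor $y_1$ on $C$. If $y_1\in N^+(u)$, then $y_1$ is a common out-neighbor of $u$ and $y$ (and $y_1\neq u,y$ since $y_1\in V(C)$ while $u,y\notin V(C)$), so $u\sim_1 y$. Otherwise the covering property forces $y_1$ to be the $C$-successor of some $u_i\in\{u_1,u_2\}$, giving a length-$2$ directed path $u\to u_i\to y_1$; since this path lies in $V(C)\cup\{u\}$ it avoids $y$, and together with the arc $y\to y_1$ it exhibits $y_1$ as a $(1,2)$-step common out-neighbor, so $u\sim_{1,2} y$. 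Crucially, in this case the intermediate vertex $u_i\in V(C)$ can never equal the external vertex $y$, so it is never forbidden.

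\textbf{Case B: $y\in X\cap V(C)$.} Here I would first prove the auxiliary claim that for every vertex $y$ of $C$ there is an anchor $u_i\in\{u_1,u_2\}\setminus\{y\}$ reachable from $y$ by a directed $C$-path of length at most $2$; this follows by locating $y$ relative to the two short sections determined by $u_1,u_2$ (if $y$ is one of $u_1,u_2$, the $C$-section from $y$ to the other anchor has length at most $2$; if $y$ is interior to a section, that section has length $2$ and the $C$-successor of $y$ is already its far endpoint). Fixing such a $u_i$ and setting $w=u_i$, the arc $u\to u_i$ (which survives deleting $y$, as $y\neq u,u_i$) together with the $C$-path from $y$ to $u_i$ of length at most $2$ (which avoids $u\notin V(C)$) yields $u\sim y$, as a common out-neighbor when that path has length $1$ and as a $(1,2)$-step competition when it has length $2$.

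The step I expect to be the main obstacle is precisely the overlap between the two regimes. A naive unified argument, routing $u$ to some out-neighbor $z$ of $y$ via $u\to u_i\to z$, breaks exactly when $y$ lies on $C$ and is itself a direct out-neighbor of $u$ whose $C$-successor is \emph{not} an out-neighbor of $u$: there the forced intermediate vertex $u_i$ coincides with $y$, and the length-$2$ path illegally traverses $y$. Resolving this is what the bounded-section hypothesis buys, since it guarantees a \emph{second} anchor reachable from $y$ within two steps along $C$, allowing me to reroute around $y$; this is the content of the Case B claim. Verifying the avoidance conditions in each subcase ($w\neq u,y$, and the length-$2$ path missing the deleted vertex) is the only bookkeeping required, and it is routine once the covering property and the $C$-distance claim are in hand.
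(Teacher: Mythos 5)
Your proof is correct and complete. Note that the paper itself does not prove Lemma~\ref{lem:cycle-adjacent}; it is imported from the companion paper \cite{choi202412step} without proof, so there is no in-text argument to compare against. Your argument is the natural direct one: the covering property (every vertex of $C$ is an anchor or the $C$-successor of an anchor, forced by the section lengths $(1,2)$ for $l=3$ and $(2,2)$ for $l=4$) handles the case $y\notin V(C)$, and the rerouting claim (from any $y\in V(C)$ some anchor other than $y$ is reachable along $C$ in at most two steps) handles $y\in V(C)$; the distinctness and avoidance checks you record ($w\in V(C)$ while $u,y$ may or may not lie on $C$) are exactly the bookkeeping the definition of $(1,2)$-step competition requires, and you verify them correctly in each case.
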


\begin{Lem} \label{lem:partiteset-size-three}
Let $D$ be a multipartite tournament.
Suppose that $D$ has no sinks and there exists an anti-$\{1,2\}$-competing set of size three contained in a partite set $X$. 
Then every anti-$\{1,2\}$-competing set of size at least three is contained in $X$.
\end{Lem}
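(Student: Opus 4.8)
The plan is to set $X_1 := X$, so that $X$ is the distinguished non-$\{1,2\}$-competing partite set (it is one, since it contains the hypothesized anti-$\{1,2\}$-competing set $S$ of size three), and then to read the whole argument off the structure theorem. Since $D$ has no sinks and carries an anti-$\{1,2\}$-competing set of size at least three in a partite set, Theorem~\ref{thm:structure} applies with $U=\emptyset$ and with the block $M$ of Figure~\ref{fig:structure} equal to $J-I$. First I would record the one structural fact I need: among non-sink vertices, the only blocks of the adjacency matrix that can contain a zero off the main diagonal are the three blocks $F_i\times F_i$, $F_i\times X_i$, $X_i\times F_i$ lying inside a single group $F_i\cup X_i$, while every other block equals $J$. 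Consequently two distinct non-sink vertices can fail to be adjacent only if they both lie in $F_i\cup X_i$ for one common index $i\in\{2,\dots,k\}$; and since the block $X_i\times X_i$ equals $J-I$, the set $X_i$ is a clique, so any anti-$\{1,2\}$-competing set meets $X_i$ in at most one vertex.

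Next I would take an arbitrary anti-$\{1,2\}$-competing set $T$ with $|T|\ge 3$. By the previous paragraph $T\subseteq F_i\cup X_i$ for a single $i$, and $|T\cap X_i|\le 1$. If $T\cap X_i=\emptyset$ then $T\subseteq F_i\subseteq X$ and there is nothing to prove, so I would assume for contradiction that $T\cap X_i=\{y\}$; then $T\cap F_i$ contains two distinct vertices $x_1,x_2$. For each of them $N^+(x_j)\subseteq X_i$ and $x_j\not\sim y$, so Theorem~\ref{thm:new-neighbor-stable-size2}(4) yields $x_j\stackrel{*}{\to}y$ or $y\stackrel{*}{\to}x_j$. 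Here a short pigeonhole finishes the structural part: if both $x_1\stackrel{*}{\to}y$ and $x_2\stackrel{*}{\to}y$, then $x_1$ and $x_2$ share the out-neighbor $y$, hence $x_1\sim_1 x_2$, contradicting the stability of $T$; and $y\stackrel{*}{\to}x_j$ can hold for at most one $j$, since it pins $N^+(y)$ down to a single vertex. Thus, after relabelling, $x_1\stackrel{*}{\to}y$ and $y\stackrel{*}{\to}x_2$, so in particular $N^+(y)=\{x_2\}$.

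The contradiction then comes from the size of $S$. Because no two vertices of $S$ are adjacent and $y\notin X$, the vertex $y$ can fail to be an out-neighbor of at most one vertex of $S$ (otherwise two vertices of $S$ would have $y$ as a common out-neighbor and hence compete); so $y$ has at least $|S|-1=2$ out-neighbors in $S\subseteq X$, which is exactly the counting underlying Proposition~\ref{Prop:sub-digraph}(2). This is incompatible with $N^+(y)=\{x_2\}$, so the assumption $T\cap X_i=\{y\}$ is untenable and $T\subseteq F_i\subseteq X$, as required. I expect the genuinely delicate point to be the middle paragraph: one must notice that the clique structure of $X_i$ restricts $T$ to a single vertex of $X_i$, and that the ``only out-neighbor'' dichotomy of Theorem~\ref{thm:new-neighbor-stable-size2}(4), together with the pigeonhole, forces $N^+(y)$ to be a singleton; once $N^+(y)$ is a singleton, its clash with the two-out-neighbor count guaranteed by a size-three $S$ is immediate.
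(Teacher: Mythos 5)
Your proof is correct, and it reaches the conclusion by a genuinely different route from the paper's. The paper argues directly on the digraph: it first uses Proposition~\ref{Prop:sub-digraph}(2) to force $|S'\cap(V(D)-X)|\le 1$, then, assuming a single vertex $v_3$ of $S'$ lies outside $X$ together with $v_1,v_2\in S'\cap X$, it orients the arc $v_3\to v_1$, picks an out-neighbor $v_1'$ of $v_1$ and an out-neighbor $v_3'\in S$ of $v_3$ distinct from $v_1$, and observes that whichever way the arc between $v_1'$ and $v_3'$ points, one of them is a $(1,2)$-step common out-neighbor of $v_1$ and $v_3$ --- an explicit construction of the forbidden adjacency. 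You instead read the localization $T\subseteq F_i\cup X_i$ and $|T\cap X_i|\le 1$ off the adjacency matrix of Figure~\ref{fig:structure} (with $U=\emptyset$ and $M=J-I$, both correctly justified), invoke Theorem~\ref{thm:new-neighbor-stable-size2}(4) to get the $x_j\stackrel{*}{\to}y$ or $y\stackrel{*}{\to}x_j$ dichotomy, and derive that $N^+(y)$ is a singleton, which clashes with the count $|N^+(y)\cap S|\ge |S|-1=2$ coming from the size-three set $S$. All the individual steps check out (in particular, your argument that the three pairwise indices for $T$ must coincide, and that the exclusive cases of the dichotomy are each ruled out by stability of $T$ or by $N^+(y)$ being pinned to two different singletons). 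What your approach buys is that it never has to exhibit a $(1,2)$-step common out-neighbor: the contradiction is a pure out-degree count. What it costs is a heavier reliance on the structure theorem and on Theorem~\ref{thm:new-neighbor-stable-size2}(4)--(5); the paper's argument needs only Proposition~\ref{Prop:sub-digraph}(2) and elementary arc-chasing, so it is more self-contained. Since the lemma appears after those theorems in the paper, there is no circularity in your dependence on them.
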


\begin{proof}
Let $S$ be an anti-$\{1,2\}$-competing set of size three contained in $X$. 
Take an anti-$\{1,2\}$-competing set $S'$ of size at least three in $D$.
Then, by Proposition~\ref{Prop:sub-digraph}(2), each pair of vertices in $S'\cap (V(D)-X)$ has a common out-neighbor in $S$ and so $|S'\cap (V(D)-X)|\leq 1$.
If $S'\cap (V(D)-X)=\emptyset$, then $S' \subseteq X$ and so we are done.
Suppose, to the contrary, that $|S'\cap (V(D)-X)|=1$.
Then $|S'\cap X| \geq 2$.
Take vertices $v_1$ and $v_2$ in $S'\cap X$ and $v_3$ in $S'\cap (V(D)-X)$.
Let $X'$ be the partite set of $D$ containing $v_3$.
If $v_1 \to v_3$ and $v_2 \to v_3$, then $v_1$ and $v_2$ compete, which contradicts the fact that $S'$ is an anti-$\{1,2\}$-competing set.
Thus $v_1 \not \to v_3$ or $v_2 \not \to v_3$.
Without loss of generality, we may assume $v_1 \not \to v_3$. Then $v_3 \to v_1$.
Since $D$ has no sinks, $v_1$ has an out-neighbor $v_1'$  belonging to $V(D)-X$.
Moreover, since $S$ is an anti-$\{1,2\}$-competing set of size three contained in $X$,
at least two vertices of $S$ are out-neighbors of $v_3$.
Thus $v_3$ has an out-neighbor $v_3'$ in $S$ distinct from $v_1$.
If $v_3' \to v_1'$ (resp.\ $v_1' \to v_3'$), then $v_1'$ (resp.\ $v_3'$) is a $(1,2)$-step common out-neighbor of $v_3$ and $v_1$, and so $v_1 \sim v_3$, a contradiction.
\end{proof}

\begin{Thm} \label{different-partite-stable-set-at most four}
Let $D$ be a loose multipartite tournament of order $n$.
Suppose $D$ has no sinks. If $D$ has an anti-$\{1,2\}$-competing set $S$ which is not included in any partite set of $D$,
then $|S| \leq 4$.
Especially, if $|S| =4$, then the following are true:
\begin{enumerate}[{(1)}]
\item there exist two partite sets $X_1$ and $X_2$ of $D$ such that $|S \cap X_1|=|S\cap X_2|=2$;
\item $n\geq 5$ and  $C_{1,2}(D) \cong K_{n}-E(K_4)$.
\end{enumerate}
\end{Thm}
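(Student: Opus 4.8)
The plan is to locate $S$ among the partite sets, fix the orientation on the four extremal vertices, and then show every remaining vertex is adjacent to all others. Being anti-$\{1,2\}$-competing, $S$ is a stable set of $C_{1,2}(D)$, so by Theorem~\ref{thm:Type-AorB-distance}(3) it meets at most two partite sets; since $S$ lies in no single partite set it meets exactly two, say $A$ and $B$, and I set $S_A=S\cap A$ and $S_B=S\cap B$. If $|S_A|\ge 3$, then a three-element subset of the anti-$\{1,2\}$-competing set $S_A$ is an anti-$\{1,2\}$-competing set of size three inside the single partite set $A$; since $D$ has no sinks, Lemma~\ref{lem:partiteset-size-three} would then force every anti-$\{1,2\}$-competing set of size at least three---in particular $S$---to lie in $A$, contradicting the choice of $S$. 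Hence $|S_A|\le 2$, and symmetrically $|S_B|\le 2$, so $|S|\le 4$; and when $|S|=4$ both equal $2$, which is part (1) after relabeling $A,B$ as $X_1,X_2$.

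Next I would pin down the orientation. Write $S_A=\{a_1,a_2\}$ and $S_B=\{b_1,b_2\}$. By Proposition~\ref{prop:same-partite}(3) the sets $N^+(a_1),N^+(a_2)$ are disjoint and lie in a common partite set, and likewise for $b_1,b_2$. No $a_i$ can dominate both $b_1$ and $b_2$: that would place $b_1,b_2$ in the single partite set containing $N^+(a_1)\cup N^+(a_2)$, force $N^+(a_{i'})$ to avoid $b_1,b_2$, and thereby put both into $N^+(b_1)\cap N^+(b_2)=\emptyset$; symmetrically no $b_j$ dominates both $a_1,a_2$. Counting the four arcs between the two pairs then forces each $a_i$ to dominate exactly one $b_j$ and vice versa, which after relabeling yields the directed $4$-cycle $a_1\to b_1\to a_2\to b_2\to a_1$ together with the containments $N^+(a_1),N^+(a_2)\subseteq B$ and $N^+(b_1),N^+(b_2)\subseteq A$. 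Finally $n\ge 5$ because a multipartite tournament has a third partite set, which is nonempty and disjoint from $A\cup B\supseteq S$.

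It then suffices to prove that every $r\in R:=V(D)-S$ is adjacent to all other vertices, for together with $S$ being stable this gives exactly $C_{1,2}(D)\cong K_n-E(K_4)$. I would obtain this uniformly by showing that each such $r$ dominates an \emph{antipodal} pair of the cycle (either $\{a_1,a_2\}$ or $\{b_1,b_2\}$) and then invoking Lemma~\ref{lem:cycle-adjacent} with $X=V(D)$, whose hypothesis holds because each antipodal pair cuts the $4$-cycle into two sections of length $2$. For $r$ outside $A\cup B$ this is immediate, since $N^+(a_i)\subseteq B$ forces $r\to a_1$ and $r\to a_2$. The main obstacle is the case $r\in A$ (and symmetrically $r\in B$): disjointness of $N^+(b_1),N^+(b_2)$ rules out $b_1,b_2\to r$, but the single-domination configurations such as $b_1\to r\to b_2$ must still be excluded, and this is the heart of the argument. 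Here I expect to use the anti-$\{1,2\}$-competing property of $S$ directly: the path $b_1\to r\to b_2$ is a directed $(b_1,b_2)$-path of length two not through $a_2$, while $a_2\to b_2$, so $b_2$ witnesses $a_2\sim_{1,2}b_1$, contradicting $a_2\not\sim b_1$. Eliminating the analogous configurations leaves only $r\to b_1$ and $r\to b_2$, so $r$ dominates $\{b_1,b_2\}$, completing the reduction to Lemma~\ref{lem:cycle-adjacent}.
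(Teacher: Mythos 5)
Your proof is correct and follows essentially the same route as the paper's: Theorem~\ref{thm:Type-AorB-distance}(3) together with Lemma~\ref{lem:partiteset-size-three} for the bound $|S|\le 4$ and part (1), the directed $4$-cycle on $S$, antipodal out-domination of the cycle by every outside vertex, and Lemma~\ref{lem:cycle-adjacent} applied with $X=V(D)$ to conclude $C_{1,2}(D)\cong K_n-E(K_4)$. The only (harmless) deviation is in the middle step: where the paper invokes Theorem~\ref{thm:new-neighbor-stable-size2}(4) to get $u_i\stackrel{*}{\to}u_j$ for the cycle vertices and thereby force the arcs from outside vertices, you instead rule out the mixed orientations such as $b_1\to r\to b_2$ by exhibiting a direct $(1,2)$-competition witness, which works just as well.
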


\begin{proof}
Suppose that $D$ has an anti-$\{1,2\}$-competing set $S$ which is not included in any partite set of $D$. 
Let $X_1,\ldots,X_k$ be the partite sets of $D$ and \[\Lambda=\{i \mid S \cap X_i \neq \emptyset\}.\]
Since there is no partite set of $D$ including $S$, $|\Lambda| \geq 2$.
Then $|\Lambda| = 2$ by Theorem~\ref{thm:Type-AorB-distance}(3).
Thus, by Lemma~\ref{lem:partiteset-size-three}, there is no partite set containing at least three vertices in $S$ and so $|S|\leq 4$. 
In addition, we have shown that if $|S|=4$, then there exist two partite sets $X_1$ and $X_2$ of $D$ such that $|S \cap X_1| = |S \cap X_2|=2$ and so (1) of the ``especially" part is true.

To show (2) of the ``especially" part, suppose $|S|=4$.
As we have shown (1) of the ``especially" part, there exist two partite sets $X_1$ and $X_2$ of $D$ such that $|S \cap X_1| = |S \cap X_2|=2$.
Then $D$ has a non-$\{1,2\}$-competing partite set.
In addition, since $k \geq 3$ by the definition of multipartite tournament, there exists the partite set $X_3$ with at least one vertex and so \[n\geq 5.\]
We note that $S \cap X_1:=\{u_1,u_2\}$ and $S \cap X_2:=\{u_3,u_4\}$ are stable sets of size two in  $C_{1,2}(D)$.
Therefore
neither $u_1$ nor $u_2$ is a common out-neighbor of $u_3$ and $u_4$ and vice versa.
Without loss of generality, we may assume \[\{(u_1,u_3), (u_2,u_4),(u_3,u_2),(u_4,u_1)\} \subset A(D).\]
$N^+(u_1) \cup N^+(u_2) \subseteq X_2$ and $N^+(u_3) \cup N^+(u_4) \subseteq X_1$ by Proposition~\ref{Prop:sub-digraph}(1).
Now, since $D$ has a non-$\{1,2\}$-competing partite set,
by Theorem~\ref{thm:new-neighbor-stable-size2}(4), \[
u_1\stackrel{*}{\to} u_3, \quad
u_2 \stackrel{*}{\to} u_4
 \quad
 u_3 \stackrel{*}{\to} u_2,
 \quad \text{and} \quad
 u_4 \stackrel{*}{\to} u_1.\]
Then the vertices in $S$ form a directed cycle $C:=u_1 \to u_3 \to u_2 \to u_4 \to u_1$ of order $4$ in $D$.
Take a vertex $x$ in $V(D) - V(C)$.
If $x \in X_1$ or $x \in X_2$  , then $\{u_3,u_4\} \subseteq N^+(x)$ or $\{u_1,u_2\} \subseteq N^+(x)$.
If $x \notin X_1 \cup X_2$, then $V(C) \subseteq N^+(x)$.
Therefore $x$ has two out-neighbors $y$ and $z$ on $C$ such that both $(y,z)$-section and $(z,y)$-section of $C$ have length $2$.
Since $x$ was arbitrarily taken from $V(D) - V(C)$,
we conclude that
each vertex in $V(D) - V(C)$ has two out-neighbors in $S$ satisfying the condition given in Lemma~\ref{lem:cycle-adjacent}.
Therefore $\left(V(D)-V(C)\right)\sim V(D)$.
Then, since $S=\{u_1,u_2,u_3,u_4\}$ is a stable set, $C_{1,2}(D) \cong K_n - E(K_4)$.
\end{proof}

\section{Interval $(1,2)$-step competition graphs}
A graph is an {\it interval graph} if we can assign each vertex $v$ a real interval $J(v) \subseteq \mathbb{R}$ such that there is an edge between two distinct vertices $v$ and $w$ if and only if $J(v) \cap J(w) \neq \emptyset$.
	Given a graph,
an induced cycle of length at least four is called a {\it hole}.
A {\it chordal graph} is a graph without holes. 
An {\it asteroidal triple} of a graph is a set of three vertices such that every pair of its vertices are joined by a path outside of the closed neighborhood of the third.
Lekkeikerker and Boland~\cite{lekkeikerker1962representation} proved that a graph is an interval graph if and only if it is chordal and contains no asteroidal triple.

\begin{Lem} \label{lem-asterodial}
Let $D$ be a loose multipartite tournament with a non-$\{1,2\}$-competing partite set.
An asteroidal triple of $C_{1,2}(D)$, if any, is contained in a partite set
$X$ that is the only non-$\{1,2\}$-competing partite set in $D$.
\end{Lem}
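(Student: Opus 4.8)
The plan is to establish the two assertions hidden in the statement: first that the asteroidal triple $T=\{a,b,c\}$ lies in a single partite set $X$, and then that this $X$ is the unique non-$\{1,2\}$-competing partite set. I would begin by noting that $T$ must be a stable set of three non-sink vertices. Indeed, the path joining $a$ and $b$ promised by the definition avoids $N[c]$, so $a,b\notin N[c]$ and hence $a\not\sim c$, $b\not\sim c$; running over the three pairs shows $T$ is pairwise non-adjacent, and no vertex of $T$ is a sink because sinks are isolated in $C_{1,2}(D)$ and cannot be joined to anything by a path. Theorem~\ref{thm:Type-AorB-distance}(3) then tells me $T$ meets at most two partite sets.

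Granting that $T$ lies in a single partite set $X$, the uniqueness half is short. Then $T$ is an anti-$\{1,2\}$-competing set of size three inside $X$, so $X$ is non-$\{1,2\}$-competing, and Proposition~\ref{Prop:sub-digraph}(2) gives that every pair of vertices of $V(D)-X$ has a common out-neighbor in $T$; such a pair therefore competes, so $V(D)-X$ is a clique of $C_{1,2}(D)$. Consequently every partite set other than $X$ is $\{1,2\}$-competing, i.e.\ $X$ is the only non-$\{1,2\}$-competing partite set. So the whole problem reduces to ruling out an asteroidal triple that meets two partite sets.

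Assume $T$ spans exactly two partite sets. Any two vertices lying in distinct sets among $X_2,\dots,X_k$ are adjacent, since $\bigcup_{i=3}^{k}X_i$ is a clique completely joined to $X_2$ in $C_{1,2}(D)$ (Theorem~\ref{thm:new-neighbor-stable-size2}(2)); hence the vertices of $T$ outside $X_1$ all lie in one partite set, and one of the two sets met by $T$ must be $X_1$. Reading the adjacency matrix of Figure~\ref{fig:structure}, and using that a non-sink vertex of $X_1$ non-adjacent to a vertex of $X_j$ must lie in $F_j$, I would see that $T$ has one of two shapes: two vertices in some $F_i$ together with one vertex in $X_i$, or one vertex in $F_2$ together with two vertices in $X_2$. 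Write $P$ for the partite set carrying two vertices $x,z$ of $T$ and $Q$ for the one carrying the remaining vertex $q$. In each shape the two cross non-adjacencies $x\not\sim q$ and $z\not\sim q$ fall in a block governed by Theorem~\ref{thm:new-neighbor-stable-size2}(4), so each is witnessed by a $\stackrel{*}{\to}$ relation. Since $x,z$ are non-adjacent they cannot both satisfy $\cdot\stackrel{*}{\to}q$ (they would compete through $q$), nor can both be $q\stackrel{*}{\to}\cdot$ (that would force $x=z$); after renaming I may take $N^+(x)=\{q\}$ and $N^+(q)=\{z\}$, a directed path $x\stackrel{*}{\to}q\stackrel{*}{\to}z$ inside $T$.

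Finally I would delete $N[x]$ from $C_{1,2}(D)$ and prove that $q$ becomes isolated, which makes the $q$--$z$ path demanded by the definition impossible. The block pattern of Figure~\ref{fig:structure} confines every non-sink survivor to $P\cup Q$. A non-sink $v\in P$ with $v\not\sim x$ has, by Proposition~\ref{prop:same-partite}(3), $N^+(v)\subseteq Q$ and $q\notin N^+(v)$; the arc between $v$ and $q$ is therefore $q\to v$, forcing $v\in N^+(q)=\{z\}$, so $z$ is the only surviving vertex of $P$. Every surviving $u\in Q$ other than $q$ satisfies $N^+(u)=\{x\}$ by Theorem~\ref{thm:new-neighbor-stable-size2}(4), whence $N^+(u)=\{x\}$ and $N^+(q)=\{z\}$ are disjoint subsets of $P$ and Proposition~\ref{prop:same-partite}(3) yields $q\not\sim u$; moreover $q\not\sim z$ because $T$ is stable. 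Thus $q$ is adjacent to no surviving vertex, a contradiction. This disconnection computation, together with the correct choice of which vertex's closed neighborhood to delete, is the crux of the argument and the step I expect to demand the most careful bookkeeping.
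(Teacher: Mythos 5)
Your proof is correct and follows essentially the same route as the paper's: reduce via the structure matrix to a triple meeting $X_1$ and one other partite set, extract the forced relations $x\stackrel{*}{\to}q\stackrel{*}{\to}z$ from Theorem~\ref{thm:new-neighbor-stable-size2}(4), and contradict the asteroidal-triple path condition for the pair $\{q,z\}$ by showing that nothing outside $N[x]$ can be adjacent to $q$. The only cosmetic differences are that you treat the two possible shapes of the triple in parallel where the paper swaps the roles of $X_1$ and $X_2$ to reduce to one shape, and you phrase the final contradiction as $q$ being isolated in $C_{1,2}(D)-N[x]$ rather than analyzing the path-neighbor $w$ of the middle vertex.
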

\begin{proof}
Suppose that $C_{1,2}(D)$ has an asteroidal triple $x$, $y$, and $z$.
Then $\{x,y,z\}$ forms a stable set of size three in $C_{1,2}(D)$.
By Theorem~\ref{thm:structure}, the adjacency matrix $A$ of $C_{1,2}(D)$ is in the form given in Figure~\ref{fig:structure}.
Then, by the structure of $A$, it is easy to check that $\bigcup_{i=3}^k X_i$ is a clique in $C_{1,2}(D)$.
In addition, $\bigcup_{i=3}^k X_i \sim X_2$ by Theorem~\ref{thm:new-neighbor-stable-size2}(2)
Therefore at least two vertices in $\{x,y,z\}$ are contained in a partite set $X_1$ or $X_2$ of $D$.
If at least two vertices in $\{x,y,z\}$ are contained in $X_2$,
then $X_2$ is non-$\{1,2\}$-competing.
Therefore $X_1$ and $X_2$'s roles can be exchanged in Theorem~\ref{thm:structure} and so we may assume $\{x,y\} \subset X_1$.
We note that $x$, $y$, and $z$ are not isolated in $C_{1,2}(D)$ and so they are not sinks in $D$.
Since $x$ and $y$ are not adjacent,
$\{x,y\} \subset F_j$ (recall that $F_j=\{v:\emptyset \neq N^+(v) \subseteq X_j\}$) for some $j \in \{2,\ldots,k\}$.
Then 
\begin{equation}\label{eq:lem-asterodial_1}
N^+(x)\cup N^+(y)\subseteq X_j .\end{equation}
Moreover, $z \in F_j \cup X_j$ by the structure of $A$.
Suppose, to the contrary, that $z \notin F_j$.
Then $z\not\in X_1$.
We note that $x$ and $y$ have no common out-neighbor, $z \not \sim x$, and $z\not \sim y$.
Then we may assume
$x \stackrel{*}{\to} z$ and $z \stackrel{*}{\to} y$ by \eqref{eq:lem-asterodial_1} and Theorem~\ref{thm:new-neighbor-stable-size2}(4).
By the way, since $\{x,y,z\}$ is an asteroidal triple, $y$ and $z$ are connected by a path $P$ avoiding the closed neighborhood of $x$.
Let $w$ be a vertex on $P$ which is adjacent to $z$.
Then $w \not \sim x$ and so, by the structure of $A$, $w \in F_j$ or $w \in X_j$.
Suppose $w \in X_j$.
Then, since $x \stackrel{*}{\to} z$, $w\to x$.
Thus, by \eqref{eq:lem-asterodial_1} and Proposition~\ref{Prop:different-partite-adjacent}, either $w \stackrel{*}{\to} x$ or $N^+(x)\cap N^+(w)=\emptyset$ and $N^+(w)\subseteq X_j\cup \{x\}$.
Since $w \in X_j$,
$w \stackrel{*}{\to} x$.
Then, since $z \stackrel{*}{\to} y$, $w$ and $z$ have no common out-neighbor.
Moreover, $N^+(w) \cup N^+(z) \subseteq X_1$ and so they are not adjacent in $C_{1,2}(D)$ by Proposition~\ref{prop:same-partite}(3), a contradiction.
Therefore $w \notin X_j$ and so $w \in F_j$.
Thus there is an arc between $w$ and $z$.
Then, since $z \stackrel{*}{\to} y$, $(w,z) \in A(D)$ and so $z$ is a common out-neighbor of $w$ and $x$.
Hence $w \sim x$, a contradiction.
Therefore $\{x,y,z\} \subset X_1$. 
Thus $V(D)- X_1$ forms a clique by Theorem~\ref{thm:new-neighbor-stable-size2}(5) and so
$X$ is the only non-$\{1,2\}$-competing partite set in $D$.
\end{proof}
Let $G$ be a graph. Two vertices $u$ and $v$ of $G$ are said to be {\it true twins} if they have the same closed neighborhood. We may introduce an analogous notion for a digraph. Let $D$ be a digraph. Two vertices $u$ and $v$ of $D$ are said to be {\it true twins} if they have the same open out-neighborhood and open in-neighborhood in $D$. 

\begin{Lem}[\cite{choi202412step}]\label{lem:true-twin}
If two non-sink vertices are true twins in a digraph $D$, then they are true twins in $C_{1,2}(D)$.
\end{Lem}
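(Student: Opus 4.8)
The plan is to promote the twin relation to a genuine automorphism of $D$, and then to observe that $C_{1,2}(D)$ is an automorphism-invariant construction. Write $u$ and $v$ for the two non-sink true twins, so $N^+(u)=N^+(v)$ and $N^-(u)=N^-(v)$.

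First I would record two elementary consequences. Since $u\notin N^+(u)=N^+(v)$ and $v\notin N^+(v)=N^+(u)$, there is no arc between $u$ and $v$. On the other hand, as $u$ and $v$ are non-sinks, $N^+(u)=N^+(v)\neq\emptyset$; any vertex $w$ in this common out-neighborhood is distinct from both $u$ and $v$ by the previous sentence, so $w$ witnesses that $u$ and $v$ compete. Hence $u\sim_1 v$, which puts each of $u,v$ into the closed neighborhood of the other in $C_{1,2}(D)$. This is precisely where the non-sink hypothesis enters: true twins in a graph must be adjacent, and it is the nonemptiness of the shared out-neighborhood that guarantees this.

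Next I would introduce the transposition $\sigma\colon V(D)\to V(D)$ exchanging $u$ and $v$ and fixing every other vertex, and check that $\sigma$ is an automorphism of $D$. An arc incident to neither $u$ nor $v$ is fixed; an arc $(u,y)$ with $y\notin\{u,v\}$ corresponds via $y\in N^+(u)=N^+(v)$ to the arc $(\sigma(u),\sigma(y))=(v,y)$, and symmetrically an arc $(x,u)$ corresponds to $(x,v)$ using $N^-(u)=N^-(v)$; arcs between $u$ and $v$ do not occur. So $\sigma$ preserves $A(D)$.

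The crux is then to transfer $\sigma$ to $C_{1,2}(D)$. Because $\sigma$ is an automorphism of $D$, for every vertex $b$ it restricts to an isomorphism from $D-b$ onto $D-\sigma(b)$, whence $d_{D-b}(a,c)=d_{D-\sigma(b)}(\sigma(a),\sigma(c))$ for all $a,b,c$. Reading this against the definition of $C_{1,2}(D)$ shows that $a$ and $b$ are adjacent in $C_{1,2}(D)$ if and only if $\sigma(a)$ and $\sigma(b)$ are, i.e.\ $\sigma$ is an automorphism of $C_{1,2}(D)$. Applying this to any $w\notin\{u,v\}$, where $\sigma(w)=w$, gives $w\sim u$ if and only if $w\sim v$. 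Together with $u\sim v$ from the first step, the closed neighborhoods of $u$ and $v$ in $C_{1,2}(D)$ coincide, so they are true twins there. I expect the main obstacle to be the distance-invariance bookkeeping in this last step — one must be careful that deleting $b$ on one side matches deleting $\sigma(b)$ on the other — but once $\sigma$ is recognized as a digraph automorphism this is routine, and the real content lies in noticing that true twins generate such an automorphism.
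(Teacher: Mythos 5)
Your proof is correct. Note first that this paper does not actually prove Lemma~\ref{lem:true-twin}; it is quoted from the companion paper \cite{choi202412step}, so there is no in-text argument to compare against. The expected direct proof would take a vertex $w$ adjacent to $u$, extract a witness $x$ of that adjacency, and transfer it to a witness for $w$ and $v$ by swapping $u$ and $v$ in the relevant arcs and paths (with a little care when $x=v$ or when $v$ lies on the length-two path). Your argument packages exactly that bookkeeping into a single observation: the transposition $\sigma$ exchanging $u$ and $v$ is an automorphism of $D$, hence of $C_{1,2}(D)$, and fixing all other vertices it forces $N(u)\setminus\{v\}=N(v)\setminus\{u\}$; the non-sink hypothesis then supplies the edge $uv$ via a common out-neighbor (which exists and is distinct from both since true twins admit no arc between them). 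All the delicate points are handled: $\sigma$ carries the deleted vertex $b$ to $\sigma(b)$ consistently with the definition of $(1,2)$-adjacency, and the case where the witness is $v$ itself is absorbed into $\sigma(v)=u$. This is a clean, slightly more conceptual route that generalizes immediately to any $C_{i,j}(D)$ and indeed to any automorphism-invariant construction, at the cost of having to verify that $\sigma$ really is an automorphism (which your equal-open-neighborhood computation does).
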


\begin{Lem}\label{lem:x_i-other-adjacent}
Let $D$ be a loose multipartite tournament and let $u$ and $v$ be two adjacent vertices in the same non-$\{1,2\}$-competing partite set.
If $u$ and $v$ are not true twins in $C_{1,2}(D)$ and $N^+(u) \cup N^+(v) \subseteq X_i$ for some $i\in \{1,\ldots,k\}$,
then each vertex in $X_i$ is adjacent to at least one of $u$ and $v$.
\end{Lem}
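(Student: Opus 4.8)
The plan is to relabel the non-$\{1,2\}$-competing partite set containing $u$ and $v$ as $X_1$ and to show that no vertex of $X_i$ can fail to be adjacent to both $u$ and $v$. First I would record two quick reductions. Since $u \sim v$ and every sink is isolated in $C_{1,2}(D)$, neither $u$ nor $v$ is a sink. Moreover, the out-neighborhood of a non-sink vertex of $X_1$ avoids $X_1$, so the hypothesis $N^+(u)\cup N^+(v)\subseteq X_i$ forces $i\neq 1$; thus $X_i$ is one of $X_2,\dots,X_k$. Because $N^+(u)\cup N^+(v)\subseteq X_i$, Proposition~\ref{prop:same-partite}(2) shows $u$ and $v$ do not $(1,2)$-compete, so as they are adjacent they must compete, and Proposition~\ref{prop:same-partite}(1) produces a common out-neighbor $c\in N^+(u)\cap N^+(v)\subseteq X_i$. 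This vertex $c$ will drive most of the contradictions.

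Next I would argue by contradiction, assuming some $w\in X_i$ satisfies $w\not\sim u$ and $w\not\sim v$. By \eqref{sta:thm:sink-hypothesis} every sink lies in $X_1$, so $w$ is a non-sink, and $w\neq u,v$ since $i\neq 1$. As $u$ (resp.\ $v$) is a non-sink with $N^+(u)\subseteq X_i$ (resp.\ $N^+(v)\subseteq X_i$) and $w\in X_i$, Theorem~\ref{thm:new-neighbor-stable-size2}(4) gives the clean dichotomy that $w\not\sim u$ holds if and only if $u\stackrel{*}{\to}w$ or $w\stackrel{*}{\to}u$, and similarly for $v$. This splits the analysis into four cases according to which unique-out-neighbor relation holds for $u$ and for $v$.

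Three of the four cases fall quickly using $c$ and the distinctness of $u,v$: if $u\stackrel{*}{\to}w$ and $w\stackrel{*}{\to}v$ (or symmetrically $w\stackrel{*}{\to}u$ and $v\stackrel{*}{\to}w$), then $c\in N^+(u)=\{w\}$ forces $c=w$ and hence $v\to w$, contradicting $w\to v$; if $w\stackrel{*}{\to}u$ and $w\stackrel{*}{\to}v$, then $\{u\}=N^+(w)=\{v\}$ forces $u=v$, impossible. The remaining and genuinely harder case is $u\stackrel{*}{\to}w$ and $v\stackrel{*}{\to}w$, i.e.\ $N^+(u)=N^+(v)=\{w\}$. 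Here I would invoke the only-now-needed hypothesis that $u$ and $v$ are not true twins in $C_{1,2}(D)$: by Lemma~\ref{lem:true-twin} they are then not true twins in $D$, and since their out-neighborhoods already coincide we must have $N^-(u)\neq N^-(v)$. Choosing $p$ in the symmetric difference, say $p\to u$ but $p\not\to v$, the vertex $p$ lies outside $X_1$, so the arc between $p$ and $v$ must be $v\to p$, whence $p\in N^+(v)=\{w\}$ and thus $w=p\to u$, contradicting $u\to w$; the symmetric choice of $p$ contradicts the arc $v\to w$.

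The step I expect to be the crux is exactly this last case: the other three are bookkeeping around the common out-neighbor $c$, but the configuration $N^+(u)=N^+(v)=\{w\}$ is precisely the degeneracy that the true-twin hypothesis is designed to exclude. Turning ``not true twins'' into a usable asymmetry of in-neighborhoods via Lemma~\ref{lem:true-twin}, and then converting that asymmetry back into a forbidden arc, is where the real content lies. Once all four cases are ruled out, no $w\in X_i$ can avoid both $u$ and $v$, which is the assertion.
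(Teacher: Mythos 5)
Your proof is correct and follows essentially the same route as the paper: argue by contradiction, use Theorem~\ref{thm:new-neighbor-stable-size2}(4) to reduce the non-adjacency of $w$ to $u$ and to $v$ to the unique-out-neighbor dichotomy, and dispose of the degenerate case $N^+(u)=N^+(v)=\{w\}$ via the true-twin lemma. The only cosmetic differences are that you kill the mixed cases with an explicitly extracted common out-neighbor $c$ where the paper instead concludes $u\not\sim v$ from Proposition~\ref{prop:same-partite}(3), and that you apply Lemma~\ref{lem:true-twin} in contrapositive form (then deriving a contradiction from the asymmetry of in-neighborhoods) where the paper applies it in the forward direction.
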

\begin{proof}
Suppose that $u$ and $v$ are not true twins in $C_{1,2}(D)$ and $N^+(u) \cup N^+(v) \subseteq X_i$ for some $i\in \{1,\ldots,k\}$.
To the contrary, suppose there is a vertex $w$ in $X_i$ not adjacent to both $u$ and $v$.
If $u \stackrel{*}{\to} w$ and $v \stackrel{*}{\to} w$, then
$u$ and $v$ are true twins in $D$ and so,  by Lemma~\ref{lem:true-twin}, they are true twins in $C_{1,2}(D)$, a contradiction.
Therefore
$u \not\stackrel{*}{\to} w$ or $v \not \stackrel{*}{\to} w$.
Without loss of generality,
we may assume $u \not \stackrel{*}{\to} w$.
Then $w\stackrel{*}{\to} u$ by Theorem~\ref{thm:new-neighbor-stable-size2}(4).
Therefore $w \not \stackrel{*}{\to} v$ and so, by the same theorem, $v \stackrel{*}{\to} w$.
Thus $u$ and $v$ has no common out-neighbor.
Hence $u$ and $v$ are not adjacent in $C_{1,2}(D)$ by Proposition~\ref{prop:same-partite}(3), a contradiction.
\end{proof}
\begin{Lem}
\label{lem:x_i-other-adjacent_2}
Let $D$ be a multipartite tournament.
If there are two adjacent vertices which are not true twins in a non-$\{1,2\}$-competing partite set $X$ of $D$,
then any vertex adjacent to none of the two vertices in $C_{1,2}(D)$ belongs to $X$.
\end{Lem}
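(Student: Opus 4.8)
The plan is to read the conclusion directly off the structure of the adjacency matrix $A$ of $C_{1,2}(D)$ supplied by Theorem~\ref{thm:structure}. Since $X$ is non-$\{1,2\}$-competing, $D$ is loose, so I may rename $X = X_1$ and use Figure~\ref{fig:structure}. Write $w$ for a vertex adjacent to neither $u$ nor $v$. First I would dispose of the case in which $w$ is a sink of $D$: by \eqref{sta:thm:sink-hypothesis} every sink lies in $X_1$, so $w \in X_1 = X$ and there is nothing more to prove. Hence for the rest of the argument I may assume $w$ is a non-sink. I also note that, being adjacent, $u$ and $v$ are themselves non-sinks (sinks are isolated in $C_{1,2}(D)$), so they lie in $X_1 \setminus U = F_2 \cup \cdots \cup F_k \cup X_1^*$.

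Next I would locate $u$ and $v$ within this block decomposition. If either of them, say $u$, belonged to $X_1^*$, then the $X_1^*$-row of $A$ — which is $O$ only in the column $U$ and is $J$ or $J-I$ in every other column — would force $u$ to be adjacent to every non-sink vertex, in particular to $w$, a contradiction. (This is exactly Proposition~\ref{prop:distinct-partitesets}.) Therefore both vertices lie in $F$-blocks: $u \in F_i$ and $v \in F_j$ for some $i,j \in \{2,\ldots,k\}$.

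Now I would exploit the fact that, reading across the $F_i$-row of $A$, the only columns whose block is not all-ones are $U$, $F_i$, and $X_i$. Since $w$ is a non-sink, non-adjacency of $w$ to $u$ confines $w$ to $F_i \cup X_i$, and similarly non-adjacency of $w$ to $v$ confines $w$ to $F_j \cup X_j$. If $i \neq j$, these two sets are disjoint — the $F$'s and $X$'s are pairwise disjoint, and each $F_\ell \subseteq X_1$ is disjoint from every $X_{\ell'}$ — which contradicts the existence of $w$; hence $i = j$. In that remaining case $N^+(u) \cup N^+(v) \subseteq X_i$, so Lemma~\ref{lem:x_i-other-adjacent} applies and tells me that every vertex of $X_i$ is adjacent to at least one of $u$, $v$; thus $w \notin X_i$, leaving $w \in F_i \subseteq X_1 = X$ by Theorem~\ref{thm:new-neighbor-stable-size2}(1).

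The block bookkeeping is routine; the step where the hypotheses genuinely matter is the final case $i = j$. There I must invoke both the assumption that $u$ and $v$ are \emph{not} true twins and the earlier Lemma~\ref{lem:x_i-other-adjacent} in order to eliminate the possibility $w \in X_i$. Without the non-true-twin hypothesis $u$ and $v$ could share all of $X_i$ as a common ``blind spot,'' a vertex of $X_i$ could be non-adjacent to both, and the desired conclusion would fail; so I expect this to be the one place requiring care rather than calculation.
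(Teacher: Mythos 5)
Your proof is correct and follows essentially the same route as the paper's: both read the location of $u$, $v$, and $w$ off the block structure of the adjacency matrix from Theorem~\ref{thm:structure} and then invoke Lemma~\ref{lem:x_i-other-adjacent} to exclude $w \in X_i$. The only difference is organizational --- you first place $u \in F_i$ and $v \in F_j$ and show $i=j$, while the paper assumes $w \in X_i$ for contradiction and deduces $\{u,v\} \subseteq F_i$ directly --- and your explicit handling of the sink and $X_1^*$ cases is sound bookkeeping that the paper leaves implicit.
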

\begin{proof}
Suppose that there exist two adjacent vertices $u,v$ which are not true twins in a non-$\{1,2\}$-competing partite set $X$ of $D$ and
a vertex $w$ not adjacent to any of them.
Then $u$ and $v$ are non-sink vertices.
In addition, since $X$ is non-$\{1,2\}$-competing, the adjacency matrix $A$ of $C_{1,2}(D)$ is in the form given in Figure~\ref{fig:structure} by Theorem~\ref{thm:structure} in which $X=X_1$ was assumed.
Suppose $w \notin X_1$. 
Then $w \in X_i$ for some $i \in \{2,\ldots,k\}$.
Thus, by the structure of $A$, 
$\{u, v \}\subseteq F_i$.
Hence $w$ is adjacent to one of $u$ and $v$ by Lemma~\ref{lem:x_i-other-adjacent}, which is a contradiction.
Therefore $w \in X_1$ and so the statement is true.
\end{proof}

\begin{Thm} \label{thm:hole-charc}
Let $D$ be a loose multipartite tournament.
If $C_{1,2}(D)$ has a hole $H$ of length at least five, 
then there exists a partite set $X$ satisfying the following:

\begin{enumerate} [{(1)}]
       \item $X$ is the only non-$\{1,2\}$-competing partite set;
       \item
      for every hole $L$ of length at least five, there exists a partite set $Y$ distinct from $X$ such that $|Y| \geq |V(L)| $ and
       $V(L) \subseteq \{v:\emptyset \neq N^+(v) \subseteq Y \} \subseteq X$.
       \end{enumerate}
       \end{Thm}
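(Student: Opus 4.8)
The plan is to read everything off the adjacency matrix $A$ of $C_{1,2}(D)$ (Theorem~\ref{thm:structure}, Figure~\ref{fig:structure}) after fixing $X_1$ to be a non-$\{1,2\}$-competing partite set, writing $F_i=\{v:\emptyset\neq N^+(v)\subseteq X_i\}$ and $C_i:=F_i\cup X_i$ for $2\le i\le k$. Since the vertices of a hole $H$ of length $\ell\ge 5$ are non-isolated, hence non-sinks, $V(H)$ avoids the block $U$; and since every vertex of $X_1^*$ is adjacent to all non-sinks, an induced cycle of length $\ge 4$ cannot meet $X_1^*$ either. Thus $V(H)\subseteq\bigcup_{i=2}^k C_i$. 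Reading $A$, for $i\neq j$ all blocks between $C_i$ and $C_j$ equal $J$, so $C_i$ and $C_j$ are completely joined; hence if $H$ met three of the $C_i$ it would contain a triangle, and if it met exactly two, with $a$ and $b$ vertices respectively, each such vertex would have all its cross-neighbours among its two hole-neighbours, forcing $a,b\le2$ and (as $\ell\ge5$ rules out a $4$-cycle component) $a=b=2$ impossible, so $a+b\le3<\ell$. Therefore $H$ meets exactly one super-block, say $V(H)\subseteq C_i=F_i\cup X_i$.

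The technical heart is to upgrade this to $V(H)\subseteq F_i$. First I would record that, by Lemma~\ref{lem:true-twin}, two vertices that are true twins in $C_{1,2}(D)$ cannot both lie on a hole of length $\ge5$, since on such an induced cycle no two vertices share a closed neighbourhood; moreover any two vertices of $F_i$ with the same singleton out-neighbourhood $\{y\}\subseteq X_i$ are adjacent (they compete through $y$), and one shows that their two remaining hole-neighbours are forced into $X_i$, so for $i\ge3$ they cannot both lie on $H$. Assuming for contradiction that $y\in X_i\cap V(H)$, I would examine the $\ell-3\ge2$ hole-non-neighbours of $y$: those in $F_i$ are, by Theorem~\ref{thm:new-neighbor-stable-size2}(4), of one of the degenerate types $N^+(u)=\{y\}$ or $N^+(y)=\{u\}$, the first contributing at most one vertex by the twin remark and the second at most one since $y$ has a unique such out-neighbour. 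When $i\ge3$ the block $X_i$ is a clique by Theorem~\ref{thm:new-neighbor-stable-size2}(2), so $y$ has no $X_i$-non-neighbour on $H$; hence $\ell-3\le2$, and a direct inspection of the surviving case $\ell=5$ (using Proposition~\ref{prop:same-partite}(3) to see the two would-be consecutive non-neighbours are in fact non-adjacent) yields a contradiction. Thus $V(H)\subseteq F_i\subseteq X_1$ in this range.

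For the count and the remaining two assertions I would argue as follows. Since $V(L)\subseteq F_j$ for the relevant $j$, each edge $v_mv_{m+1}$ of a hole $L$ admits a common out-neighbour $w_m\in X_j$ by Proposition~\ref{prop:same-partite}(3); if two edges shared such a $w$, its four (distinct) predecessors would pairwise compete, producing a clique of size $\ge3$ on $L$, which is impossible, so the $w_m$ are distinct and $|X_j|\ge|V(L)|$. Taking $Y:=X_j$ gives $V(L)\subseteq\{v:\emptyset\neq N^+(v)\subseteq Y\}=F_j\subseteq X_1$ with $|Y|\ge|V(L)|$ and $Y\neq X_1$, which is exactly part~(2) with $X=X_1$. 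For part~(1), the containment $V(H)\subseteq X_1$ used only that $X_1$ is non-$\{1,2\}$-competing; if a second partite set $X_2$ were also non-$\{1,2\}$-competing, the same argument with $X_2$ as base would give $V(H)\subseteq X_2$, whence $V(H)\subseteq X_1\cap X_2=\emptyset$, a contradiction. Hence $X:=X_1$ is the unique non-$\{1,2\}$-competing partite set.

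The main obstacle is the case $i=2$ with $X_2$ itself non-$\{1,2\}$-competing, where the block $M$ is not a clique and $y$ may have hole-non-neighbours inside $X_2$. Here I would split on the types of edges of $H$: by Lemma~\ref{lem:x_i-other-adjacent_2}, if $H$ has an edge inside $F_2$ then every vertex adjacent to neither endpoint lies in $X_1$, confining all but at most two hole-vertices to $F_2$; symmetrically an edge inside $X_2$ confines all but two to $X_2$, and for $\ell\ge5$ these two possibilities are mutually exclusive. The stubborn configuration is the remaining hole whose vertices alternate between $F_2$ and $X_2$: there Proposition~\ref{prop:same-partite}(3) forces the out-neighbourhoods on each side to be pairwise disjoint, comparing the directions of the arcs between the two sides forces every $X_2$-vertex of $H$ to point into $X_1$ (otherwise all cross-arcs would run $F_2\to X_2$, contradicting disjointness), and tracing the resulting $(1,2)$-competitions then produces a chord, contradicting that $H$ is induced. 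Closing this alternating case cleanly, together with the small-length checks $\ell\in\{5,6\}$, is the part I expect to require the most care.
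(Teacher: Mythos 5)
Your opening reduction is sound and genuinely different from the paper's: you confine $V(H)$ to a single ``super-block'' $C_i=F_i\cup X_i$ by reading the block matrix, whereas the paper disposes of $\ell\ge 6$ in two lines via Lemma~\ref{lem-asterodial} (every vertex of such a hole lies in an asteroidal triple, so $V(H)$ sits inside the unique non-$\{1,2\}$-competing partite set) and saves its detailed case analysis for $\ell=5$. Your closing steps are also correct: the distinctness of the common out-neighbours $w_m$ (three hole-vertices sharing an out-neighbour would form a triangle) gives $|X_j|\ge |E(L)|=|V(L)|$ exactly as in the paper, and your ``rerun the argument with the other base'' trick for uniqueness of $X$ is arguably cleaner than the paper's explicit verification that $V(D)-X$ is a clique.

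The genuine gap is the step $V(H)\subseteq C_i\Rightarrow V(H)\subseteq F_i$ when $i=2$ and $X_2$ is itself non-$\{1,2\}$-competing, which you flag but do not close. This is not a routine check; it is the entire content of the theorem in its hardest configuration, precisely the situation the paper's $\ell=5$ analysis and its Subcases 1-1/1-2 exist to handle. Your sketch for the ``alternating'' hole --- disjointness of out-neighbourhoods forcing arc directions, then ``tracing the resulting $(1,2)$-competitions produces a chord'' --- does not identify the chord or why it exists, and the non-alternating mixed configurations (e.g.\ an $F_2$-edge and an $X_2$-edge elsewhere on a longer hole, or the short cases $\ell\in\{5,6\}$) are not addressed at all. (A more direct kill of the purely alternating case: its $F_2$-side contains three pairwise non-adjacent vertices of $X_1$, so Theorem~\ref{thm:new-neighbor-stable-size2}(5) makes $V(D)-X_1$ a clique, contradicting $v_1\not\sim v_3$ with $v_1,v_3\in X_2$; but this still leaves the mixed configurations.) Similarly, in the $i\ge 3$ branch the assertion that two hole-vertices with common out-neighbourhood $\{y\}$ force their outer hole-neighbours into $X_i$ is stated as ``one shows that''; it does follow from Corollary~\ref{Cor:cor-adajcent}(3), but it is left unproved. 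Note finally that until the $i=2$ case is closed, part (1) is also unproven, because your uniqueness argument presupposes the containment $V(H)\subseteq X_{\mathrm{base}}$ for an arbitrary non-$\{1,2\}$-competing choice of base partite set.
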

\begin{proof}
Since $D$ has a non-$\{1,2\}$-competing partite set,
the adjacency matrix $A$ of $C_{1,2}(D)$ is in the form given in Figure~\ref{fig:structure} by Theorem~\ref{thm:structure} in which $X_1$ was assumed to be a non-$\{1,2\}$-competing partite set of $D$.
Suppose that $C_{1,2}(D)$ has a hole $H=v_0v_1 \ldots v_{l-1}v_0$ of length $l\geq 5$.
Then $v_0,v_1,\ldots,v_{l-1}$ are not sinks in $D$.

We will prove part (1) by considering two cases.

{\it Case 1}. $l \geq 6$.
Then there exists an asteroidal triple $\{x,y,z\}$ on $H$ and so, by Lemma~\ref{lem-asterodial}, $\{x,y,z\}$ is contained in a partite set $X$ of $D$.
Since $\{x,y,z\}$ forms a stable set in $C_{1,2}(D)$, $\{x,y,z\}$ is an anti-$\{1,2\}$-competing set.
Thus, by Proposition~\ref
{Prop:sub-digraph}(2), $V(D)-X$ forms a clique in $C_{1,2}(D)$ and so $X$ is the only non-$\{1,2\}$-competing partite set.
Then $X=X_1$.
Since $l \geq 6$ by our assumption, each vertex on $H$ can form an asteroidal triple with two some vertices on $H$ and so $V(H) \subset X_1$.

{\it Case 2}. $l=5$.
To the contrary, suppose that there is no partite set $X$ such that $|V(H) \cap X|\geq3$.
Then, since $l =5$, there are at least three partite sets intersecting with $V(H)$.
Therefore $V(H)$ intersects with two partite sets $X_i$ and $X_j$ for some distinct $i,j\in \{2,\ldots,k\}$.
By the way, we see from the structure of $A$ that $X_i \sim X_j$.
Thus there exist a vertex in $V(H) \cap X_i$ and a vertex in $V(H) \cap X_j$ which are consecutive on $H$.
Without loss of generality,
we may assume $v_0 \in V(H)\cap  X_i $ and $v_1 \in V(H)\cap X_j $.
Then, since $v_3$ is adjacent to neither $v_0$ nor $v_1$,
the structure of $A$ shows that there is no vertex adjacent to neither $v_0$ nor $v_1$, which is impossible.
Therefore there exists a partite set $X$ such that $|V(H) \cap X|\geq 3$.
Then, since $l=5$,
there exist
two consecutive vertices on $H$ belonging to $X$.
Without loss of generality, we may assume $\{v_0,v_1\} \subset X$.
Since $l=5$, $v_0$ and $v_1$ are not true twins in $C_{1,2}(D)$ and so, by Lemma~\ref{lem:true-twin}, they are not true twins in $D$.
Then, by Lemma~\ref{lem:x_i-other-adjacent_2}, $v_3 \in X$.
Since $\{v_0,v_1,v_3\} \subseteq X$, $X$ is non-$\{1,2\}$-competing. Then $X= X_1$ or $X_2$. 

{\it Subcase 1}. $X= X_1$.
Suppose, to the contrary, that $v_4 \notin X$ and $v_2 \notin X$.
Then $v_4 \in Y$ for a partite set $Y$ distinct from $X$.
Since $v_1$ is not adjacent to $v_4 \in Y$, we see from the structure of $A$ that $\emptyset \neq N^+(v_1)\subseteq Y$.
Therefore $v_1 \stackrel{*}{\to} v_4$ or $v_4 \stackrel{*}{\to} v_1$ by Theorem~\ref{thm:new-neighbor-stable-size2}(4).
If $v_4 \stackrel{*}{\to} v_1$, then $v_4$ is a common out-neighbor of $v_0$ and $v_3$ and so $v_0 \sim v_3$, which is impossible.
Therefore $v_1 \stackrel{*}{\to} v_4$.
By the same argument, we may show $v_0 \stackrel{*}{\to} v_2$.
If $v_2$ or $v_4$ is an out-neighbors of $v_3$,
then $v_3 \sim v_0$ or $v_3 \sim v_1$, which is impossible.
Thus $v_2$ and $v_4$ are in-neighbors of $v_3$ and so $v_2\sim v_4$, which is impossible. 
Therefore $v_2 \in X$ or $v_4 \in X$. 
Then if $v_4 \in X$, then $v_2\in X$ by Lemma~\ref{lem:x_i-other-adjacent_2} applied to $v_0$ and $v_4$. In addition, if $v_2 \in X$, then $v_4\in X$ by Lemma~\ref{lem:x_i-other-adjacent_2} applied to $v_1$ and $v_2$.
Thus \[V(H)\subseteq X.\]
Then, since any three vertices on $H$ do not form a triangle, each vertex in $V(D)- X$
has at most two in-neighbors in $V(H)$ and so
has at least $|V(H)|-2$ out-neighbors in $V(H)$, that is, $|N^+(w) \cap V(H) |\geq|V(H)|-2= 3$ for each vertex $w$ in $V(D)- X$.
Now, for a pair of vertices $u$ and $v$ in $V(D) - X$,
\[ \begin{aligned}
  |\left(N^+(u)\cap N^+(v)\right)  \cap V(H)| &\geq
|N^+(u)\cap V(H)| + |N^+(v) \cap V(H)|-|V(H)| \\&\geq 2 \times 3-5= 1.
 \end{aligned}
\]
Therefore any pair of vertices in $V(D) - X$ has a common out-neighbor in $V(H)$ and so $V(D) - X$ forms a clique in $C_{1,2}(D)$.
Thus $X$ is the only non-$\{1,2\}$-competing partite set of $D$. 

{\it Subcase 2}. $X=X_2$.
Then we may switch $X_1$ with $X_2$ to
still have the adjacency matrix of $C_{1,2}(D)$ in the from given in Figure~\ref{fig:structure} since Theorem~\ref{thm:structure} is applicable to any non-$\{1,2\}$-competing partite set.
Thus we may apply the above argument to conclude that $V(D)- X_2$ forms a clique in $C_{1,2}(D)$.
However,  since $X_1$ was assumed to be a non-$\{1,2\}$-competing partite set, we reach a contradiction.
Therefore $X\neq X_2$ and so $X=X_1$.
Thus
$X_1$ is the only partite set containing a hole of length at least five.
Then, since $X_1$ is the only non-$\{1,2\}$-competing partite set of $D$, 
 the part(1) is true.

For each $2\leq i \leq l-2$,
$v_0 \not \sim v_i$ and so $\{v_0,v_i\}\subseteq F_{\sigma(i)}$ for some $\sigma(i) \in \{2,\ldots,k\}$ by the structure of $A$.
Then, since $F_2,\ldots,F_k$ are mutually disjoint,
 $\{v_0,v_2,v_3,\ldots,v_{l-2}\}\subset F_{j}$ for some $j \in \{2,\ldots,k\}$.
Moreover, since $v_1 \not \sim v_3$ and $v_{l-1} \not \sim v_2$, 
$\{v_1,v_{l-1} \}\subseteq F_j$.
Thus $V(H) \subseteq F_j$.
Hence any pair of adjacent vertices in $V(H)$ must have a common out-neighbor in $X_j$ by Proposition~\ref{prop:same-partite}(3).
Then, since each vertex in $X_j$ can be a common out-neighbor of at most two vertices in $V(H)$, $|E(H)|\leq |X_j|$.
Since $|V(H)|=|E(H)|$, $|V(H)|\leq |X_j|$.
In addition,
since we have shown that $X_1$ is the only partite set containing a hole of length at least five, the part(2) is true.
\end{proof}

\begin{Thm} \label{thm:hole_four}
Let $D$ be a loose multipartite tournament and $X$ be a non-$\{1,2\}$-competing partite set.
Suppose that there is a hole  $H:=v_0v_1v_2v_3v_0$ of length four in $C_{1,2}(D)$. Then the following are true:
\begin{enumerate}[{(1)}] 
\item (i) $X$ is the only non-$\{1,2\}$-competing partite set in $D$ or (ii) $|X|\geq 4$ and there is exactly one non-$\{1,2\}$-competing partite set in $D$ other than $X$ and its size is at least four;
\item (i) if $v_0$ and $v_2$ (resp.\ $v_1$ and $v_3$) belong to distinct partite sets in $D$, then $v_0 \stackrel{*} \to v_2$ or $v_2 \stackrel{*} \to v_0$ (resp.\ $v_1 \stackrel{*} \to v_3$ or $v_3 \stackrel{*} \to v_1$) (ii) otherwise, $D$ has a partite set of size at least four.
\end{enumerate}
\end{Thm}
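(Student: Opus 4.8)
The plan is to fix $X=X_1$ as the distinguished non-$\{1,2\}$-competing partite set and to read everything off the adjacency matrix $A$ of $C_{1,2}(D)$ displayed in Figure~\ref{fig:structure} (Theorem~\ref{thm:structure}). Since each $v_i$ lies on a hole it is not isolated, hence not a sink; so every $v_i$ lies in some $F_j$, in $X_1^*$, or in $X_2\cup\cdots\cup X_k$. The two non-edges (the \emph{diagonals}) $\{v_0,v_2\}$ and $\{v_1,v_3\}$ must fall in the $\star$-blocks or the $M$-block, so each diagonal is of exactly one kind: \emph{intra} (both endpoints in one $F_j\subseteq X_1$, or both in $X_2$) or \emph{inter} (one endpoint in $F_j$ and the other in $X_j$ for some $j\in\{2,\dots,k\}$). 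This dichotomy drives both parts.

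Part (2)(i) is then immediate: if a diagonal, say $\{v_0,v_2\}$, is inter, then after relabeling $v_0\in F_j$ and $v_2\in X_j$, so $N^+(v_0)\subseteq X_j$ with $v_2\in X_j$, and Theorem~\ref{thm:new-neighbor-stable-size2}(4) forces $v_0\stackrel{*}{\to}v_2$ or $v_2\stackrel{*}{\to}v_0$; the argument for $\{v_1,v_3\}$ is identical. For part (2)(ii) I assume both diagonals are intra, so each lies wholly in one partite set and the hole is either contained in a single partite set (which then has size at least four, and we are done) or splits as two vertices in $X_1$ and two in $X_2$. In the split case Proposition~\ref{prop:same-partite}(1) gives $N^+(v_0)\cap N^+(v_2)=\emptyset$ and $N^+(v_1)\cap N^+(v_3)=\emptyset$, and \eqref{sta:thm:competing_set_hypothesis} places the out-neighborhoods of $\{v_0,v_2\}$ in $X_2$. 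Reading the four cross-pairs between $\{v_0,v_2\}$ and $\{v_1,v_3\}$ as an orientation of $K_{2,2}$, the two disjointness conditions force in-degree at most one at every vertex from the opposite side, so the four cross-arcs form a directed $4$-cycle, say $v_0\to v_1\to v_2\to v_3\to v_0$ after relabeling. Now each hole edge must be realized \emph{without} a star: $v_0\to v_1$ together with $v_0\sim v_1$ and Theorem~\ref{thm:new-neighbor-stable-size2}(4) forbids $v_0\stackrel{*}{\to}v_1$, so $v_0$ has a second out-neighbor in $X_2$, necessarily distinct from $v_3$ (as $v_3\to v_0$) and from the out-neighbors of $v_2$ (by disjointness). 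Applying this to $v_2\to v_3$ yields two further distinct vertices of $X_2$, so $|X_2|\ge 4$; the symmetric argument on $v_1\to v_2$ and $v_3\to v_0$ gives $|X_1|\ge 4$.

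For part (1), Theorem~\ref{thm:Type-AorB-distance}(4) bounds the number of non-$\{1,2\}$-competing partite sets by two, and the block structure of $A$ shows that the only candidates are $X_1$ and $X_2$; thus either (i) holds, or $X_2$ is also non-$\{1,2\}$-competing and the entire content is to prove $|X_1|\ge 4$ and $|X_2|\ge 4$. In that situation Theorem~\ref{thm:structure} applies with $X_2$ distinguished as well, so anti-$\{1,2\}$-competing pairs $\{a,a'\}\subseteq X_1$ and $\{b,b'\}\subseteq X_2$ exist, each with disjoint nonempty out-neighborhoods pointing into the \emph{other} set by \eqref{sta:thm:competing_set_hypothesis}; this already yields two distinct vertices in each of $X_1$ and $X_2$. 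The strategy is to use the given hole to produce the remaining two distinct vertices in each set, via the same ``a hole edge forbids a star, hence forces an extra out-neighbor'' mechanism as in part (2)(ii), tracked edge by edge with Proposition~\ref{Prop:different-partite-adjacent} and Corollary~\ref{Cor:cor-adajcent}.

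The main obstacle is precisely this last step. When a diagonal of $H$ is inter rather than intra, the hole need not split as a clean $2+2$ with the diagonals equal to the partite classes, and—because the block $X_j\times F_j$ for $j\ge 3$ can still carry a non-edge—a diagonal may even reach a third partite set through a vertex $w\in X_j$ paired with some $p\in F_j\subseteq X_1$. Handling these configurations requires carefully distinguishing, for every hole edge, whether adjacency comes from a genuine common out-neighbor or only from a length-$2$ path, and then counting the distinct out-neighbors forced into $X_1$ and into $X_2$. I expect Theorem~\ref{different-partite-stable-set-at most four} to serve as the key constraint that tames the small cases: since $C_{1,2}(D)\cong K_{n}-E(K_4)$ contains no induced $4$-cycle, the presence of $H$ forbids any anti-$\{1,2\}$-competing set of size four spanning two partite sets as $2+2$, which rules out the degenerate low-order configurations and channels every remaining case into the directed-$4$-cycle counting established in part (2)(ii).
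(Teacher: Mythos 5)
Your part (2) is essentially complete and sound. The reduction of (2)(i) to Theorem~\ref{thm:new-neighbor-stable-size2}(4) via the block structure of $A$ (a non-edge between non-sinks in distinct partite sets can only sit in an $F_j \times X_j$ block) is a legitimate shortcut; the paper instead argues directly from Proposition~\ref{Prop:different-partite-adjacent} that if neither $v_0 \stackrel{*}{\to} v_2$ nor $v_2 \stackrel{*}{\to} v_0$ then every vertex outside $\{v_0,v_2\}$ has $v_0$ or $v_2$ as an out-neighbor, forcing $v_1$ and $v_3$ to $\{1,2\}$-compete. Your (2)(ii) in the split $2+2$ configuration (directed $4$-cycle, then ``a hole edge forbids a star'' producing two extra out-neighbors in each class) matches the paper's Subcase 1-2; you should just justify explicitly why $N^+(v_0)\cup N^+(v_2)$ lands in the partite set containing $v_1,v_3$ rather than citing the normalization \eqref{sta:thm:competing_set_hypothesis} (if it landed elsewhere, $v_0$ would be a common out-neighbor of $v_1$ and $v_3$), but that is recoverable.

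Part (1), however, has a genuine gap, and you flag it yourself as ``the main obstacle.'' Two configurations are left unproved. First, when $V(H)$ lies in a single partite set ($V(H)\subseteq X_1$), you still must decide between alternatives (i) and (ii); the paper does this by locating the common partite set $X_\delta$ containing $N^+(v_1)\cup N^+(v_3)$: if $\delta=2$ the four hole edges yield four \emph{distinct} common out-neighbors in $X_2$ (giving $|X_2|\geq 4$), and if $\delta\neq 2$ then $v_1$ is a common out-neighbor of all of $X_2$, so $X_2$ is $\{1,2\}$-competing and (i) holds. Your ``two from an anti-competing pair plus two more from the hole'' sketch does not produce this. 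Second, and more seriously, when a diagonal is inter your proposed strategy points in the wrong direction: the correct conclusion there is not $|X_1|,|X_2|\geq 4$ but that $X$ is the \emph{unique} non-$\{1,2\}$-competing partite set, i.e., alternative (i). The paper proves this by exploiting $v_0\stackrel{*}{\to}v_2$: since $v_0$ would otherwise be a common out-neighbor of $v_1$ and $v_3$, at least one of them lies in $X_1$, and a short case analysis on the location of the other shows that every pair of vertices in $V(D)\setminus X_1$ has a common out-neighbor among $\{v_0,v_1,v_3\}$ (or is the adjacent pair $\{v_2,v_3\}$). Your appeal to Theorem~\ref{different-partite-stable-set-at most four} does not substitute for this: that theorem requires $D$ to have no sinks and concerns anti-$\{1,2\}$-competing sets of size four, whereas $V(H)$ is not anti-$\{1,2\}$-competing, so it neither rules out the remaining configurations nor reduces them to your directed-$4$-cycle count.
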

\begin{proof}
Since $H=v_0v_1v_2v_3v_0$ is a hole, 
\[v_0 \not \sim v_2 \quad \text{and} \quad v_1\not \sim v_3\]and
$v_0,v_1,v_2,v_3$ are non-sinks in $D$.
Let $X_1,\ldots,X_k$ be the partite sets of $D$ for some $k\geq 3$.

{\it Case 1.} each of $\{v_0,v_2\}$ and $\{v_1,v_3\}$ is contained in a partite set of $D$.
Then \[ \{v_0,v_2\} \subseteq X_{\alpha} \quad \text{and} \quad \{v_1,v_3\} \subseteq X_{\beta}\] for some $\alpha, \beta \in \{1,2\ldots,k\}$.
Without loss of generality, we may assume \[\alpha=1.\]
Since $v_0 \not \sim v_2$ and $v_1 \not \sim v_3$, \begin{equation}\label{eq:thm:interval2-1}
 N^+(v_0)\cap N^+(v_2)=\emptyset \quad
 \text{and} \quad N^+(v_1)\cap N^+(v_3)=\emptyset.
\end{equation} and each of $N^+(v_0) \cup N^+(v_2)$ and $N^+(v_1) \cup N^+(v_3)$ is included in a partite set by Proposition~\ref{prop:same-partite}(3). 
Then
\begin{equation}\label{eq:thm:interval2-2}
	N^+(v_0) \cup N^+(v_2) \subseteq X_{\gamma} \quad \text{and} \quad N^+(v_1) \cup N^+(v_3) \subseteq X_{\delta}
\end{equation}
for some $\gamma, \delta \in \{1,2\ldots,k\}$.
Without loss of generality, we may assume 
\[\gamma=2.\]
Then $v_0$ is an out-neighbor of the vertices in $\bigcup_{i=3}^k X_i$ and so
\statement{sta:hole_length_four_2}
{$X_i$ is a $\{1,2\}$-competing partite set for each $3\leq i \leq k$.
}

{\it Subcase 1-1}. $\beta=1$.
Then \[|X_1|\geq 4.\]
We first suppose $\delta=2$.
Then there are at least four vertices $u_0,u_1,u_2,u_3$ in $X_2$ such that $u_i$ is a common out-neighbor of $v_i$ and $v_{i+1}$ for each $0\leq i \leq 3$ (we assume $v_0=v_4$) by Proposition~\ref{prop:same-partite}(3).
Since $H$ is an induced cycle in $C_{1,2}(D)$,
$u_0,u_1,u_2,u_3$ are distinct.
Thus \[|X_2|\geq 4.\]
If $X_2$ is a non-$\{1,2\}$-competing partite set,
then $X_1$ and $X_2$ are the only non-$\{1,2\}$-competing partite sets in $D$ by \eqref{sta:hole_length_four_2}, which satisfies (ii) of the statement(1).
If $X_2$ is a $\{1,2\}$-competing partite set,
then $X_1$ is the only non-$\{1,2\}$-competing partite set by
\eqref{sta:hole_length_four_2}, which satisfies (i) of the statement(1).

Now suppose $\delta \neq 2$.
Without loss of generality, we may assume $\delta=3$.
Then $N^+(v_1)\cup N^+(v_3)\subseteq X_3$.
Thus $v_1$ is an out-neighbor of each vertex in $X_2$.
Hence $X_2$ is a $\{1,2\}$-competing partite set and so, by
\eqref{sta:hole_length_four_2}, $X_1$  
is the only non-$\{1,2\}$-competing partite set, which satisfies (i) of the statement(1).

{\it Subcase 1-2}. $\beta \neq 1$.
Then, by \eqref{sta:hole_length_four_2}, $\beta=2$, that is, 
\[ \{v_1,v_3\} \subseteq X_2.\]
By Theorem~\ref{thm:Type-AorB-distance}(4), $X_2$ is the only non-$\{1,2\}$-partite set distinct from $X_1$.
Now it suffices to show that $|X_1|\geq 4$ and $|X_2| \geq 4$ to prove the statement(1).
From \eqref{eq:thm:interval2-1} and the facts that $\{v_0,v_2\}\subseteq X_1$ and $\{v_1,v_3\} \subseteq X_2$, we may deduce that $\{v_0,v_1,v_2,v_3\}$ forms a directed cycle of length $4$.
Without loss of generality, we may assume $v_0 \to v_1 \to v_2 \to v_3 \to v_0$.
Then, by \eqref{eq:thm:interval2-2}, \begin{equation}\label{eq:thm:interval2-3} N^+(v_0) \cup N^+(v_2)\subseteq X_2 \quad \text{and} \quad N^+(v_1) \cup N^+(v_3)\subseteq X_1\end{equation}
Since $H$ is a cycle,
$v_i \sim v_{i+1}$ for each $0\leq i \leq 3$ (we assume $v_0=v_4$).
Thus
$v_i \not \stackrel{*}{\to} v_{i+1}$ for each $0\leq i \leq 3$ by Corollary~\ref{Cor:cor-adajcent} and so $v_{i}$ has an out-neighbor $w_i$ distinct from $v_{i+1}$ for each $0\leq i \leq 3$.
Then $\{w_1,w_3\}\subseteq X_1$ and $\{w_2,w_4\}\subseteq X_2$ by \eqref{eq:thm:interval2-3} and so $\{v_0,v_2,w_1,w_3\}\subseteq X_1$ and
$\{v_1,v_3,w_2,w_4\}\subseteq X_2$.
In addition, by \eqref{eq:thm:interval2-1},
$w_1 \neq w_3$ and  $w_2 \neq w_4$ and so $|X_1|\geq 4$ and $|X_2|\geq 4$.
Hence the statement(1) is true.

Further, we have shown that $D$ has a non-$\{1,2\}$-competing partite set of size at least four in each subcase, which implies the ``otherwise" part of the statement(2) is true.

{\it Case 2.} either $v_0$ and $v_2$ or $v_1$ and $v_3$ belong to distinct partite sets.
We first claim 
\statement{sta:hole_length_four_new1}
{
if $v_0$ and $v_2$ (resp.\ $v_1$ and $v_3$) belong to distinct partite sets in $D$,
then $v_0 \stackrel{*} \to v_2$ or $v_2 \stackrel{*} \to v_0$ (resp.\ $v_1 \stackrel{*} \to v_3$ or $v_3 \stackrel{*} \to v_1$).
}
To the contrary, we suppose $v_0$ and $v_2$ belong to distinct partite sets, $v_0 \not \stackrel{*} \to v_2$, and $v_2 \not \stackrel{*} \to v_0$.
Without loss of generality, we may assume $v_0\in X_1, v_2 \in X_2$, and $v_0 \to v_2$.
Then, by Proposition~\ref{Prop:different-partite-adjacent}, $N^+(v_0)\cap N^+(v_2)=\emptyset$ and there exists a partite set $X$, such that $N^+(v_2)\subseteq X$ and $N^+(v_0)\subseteq X \cup\{v_2\}$.
Since $v_2$ is a non-sink, $X\neq X_2$.
In addition, since $v_0$ has an out-neighbor distinct from $v_2$, $X\neq X_1$.
Then we can easily check that every vertex in $V(D)\setminus \{v_0,v_2\}$ has $v_0$ or $v_2$ as an out-neighbor.
Thus, by the assumption that $v_0 \to v_2$, $v_1$ and $v_3$ $\{1,2\}$-compete, which is impossible.
Therefore $v_0 \stackrel{*} \to v_2$ or $v_2 \stackrel{*} \to v_0$.
Then, by symmetry of $H$, if $v_1$ and $v_3$ belong to distinct partite sets in $D$, then $v_1 \stackrel{*} \to v_3$ or $v_3 \stackrel{*} \to v_1$.
Hence we conclude that \eqref{sta:hole_length_four_new1} holds.

Now, by the case assumption, we suppose that
$v_0$ and $v_2$ belong to distinct partite sets.
Without loss of generality, we may assume $v_0 \stackrel{*} \to v_2$ by \eqref{sta:hole_length_four_new1} and $v_0 \in X_1$ and $v_2 \in X_2$.
If $\{v_1,v_3\} \subseteq \bigcup_{i=2}^k X_i$, 
then $v_0$ is an out-neighbor of $v_1$ and $v_3$ and so $v_1$ and $v_3$ $\{1,2\}$-compete, which is impossible.
Thus $\{v_1,v_3\} \not \subseteq \bigcup_{i=2}^kX_i$.
Then, without loss of generality,
we may assume $v_1 \in X_1$.

Suppose $v_3 \notin X_1$.
Then, by \eqref{sta:hole_length_four_new1}, $v_1 \stackrel{*}\to v_3$ since $v_0$ is an out-neighbor of $v_3$.
Take two vertices $x,y$ in $V(D)\setminus X_1$.
If $\{x,y\}=\{v_2,v_3\}$, then
$x$ and $y$ are adjacent by $H$.
If $\{x,y\}\neq \{v_2,v_3\}$, then
$v_0$ or $v_1$ is a common out-neighbor of $x$ and $y$.
Thus any pair of vertices in
$V(D)\setminus X_1$ compete.
Hence $X_1$ is the only non-$\{1,2\}$-competing partite set in $D$.

Now suppose $v_3 \in X_1$.
Then, by Proposition~\ref{prop:same-partite}, there exists a partite set $X$ such that $N^+(v_1) \cup N^+(v_3) \subseteq X$.
If $X=X_2$, then $v_1 \to v_2$ and $v_3 \to v_2$ since $v_0 \stackrel{*}\to v_2$, which contradicts $v_1 \not \sim v_3$.
Thus $X \neq X_2$ and so $v_1 \not \to v_2$ and $v_3 \not \to v_2$.
We note $N^+(v_1)\cap N^+(v_3)=\emptyset$.
Take a vertex $z$ in $V(D)\setminus X_1$.
Then the following are true:
\begin{itemize}
\item if $z \notin N^+(v_0)\cup N^+(v_1) \cup N^+(v_3)$, then $\{v_0,v_1,v_3\} \subseteq N^+(z)$;
\item if $z \in N^+(v_0)=\{v_2\}$, then $\{v_1,v_3\}\subseteq N^+(z)$;
\item if $z\in N^+(v_1)$, then $\{v_0,v_3\}\subseteq N^+(z)$;
\item if $z\in N^+(v_3)$, then $\{v_0,v_1\} \subseteq N^+(z)$.
\end{itemize}
Thus any pair of vertices in
$V(D)\setminus X_1$ compete and so $X_1$ is the only non-$\{1,2\}$-competing partite set in $D$.
\end{proof}

\begin{Cor}
	Let $D$ be a loose multipartite tournament.
	Suppose that $D$ has no vertex of outdegree one and each partite set have set has sizes at most three.
	Then $C_{1,2}(D)$ is chordal.
\end{Cor}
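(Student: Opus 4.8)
The plan is to prove chordality by showing that $C_{1,2}(D)$ has no hole, splitting into the two ranges of hole length governed by Theorem~\ref{thm:hole-charc} and Theorem~\ref{thm:hole_four}. Since $D$ is loose it has a non-$\{1,2\}$-competing partite set, so both theorems apply. The one preliminary observation I would record is that every vertex lying on a hole is a non-sink of $D$, because a sink is isolated in $C_{1,2}(D)$ and cannot sit on a cycle. Combined with the hypothesis that $D$ has no vertex of outdegree one, this forces every vertex on a hole to have outdegree at least two; in particular, no such vertex $u$ can satisfy $u\stackrel{*}{\to}w$ for any $w$.

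First I would dispose of holes of length at least five. If $L$ is such a hole, then Theorem~\ref{thm:hole-charc}(2) produces a partite set $Y$ with $|Y|\geq |V(L)|\geq 5$, contradicting the hypothesis that every partite set has size at most three. Hence no hole of length at least five exists.

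Next I would handle holes of length four. Suppose $H:=v_0v_1v_2v_3v_0$ is such a hole and apply Theorem~\ref{thm:hole_four} with a non-$\{1,2\}$-competing partite set in the role of $X$. If one of the diagonal pairs $\{v_0,v_2\}$, $\{v_1,v_3\}$ lies in two distinct partite sets, then part (2)(i) makes one vertex of that pair the unique out-neighbor of the other, exhibiting a vertex of outdegree one on $H$, which is impossible by the preliminary observation. Otherwise both diagonal pairs lie inside partite sets, and part (2)(ii) yields a partite set of size at least four, again contradicting the size bound. So no hole of length four exists either, and $C_{1,2}(D)$ is chordal.

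Essentially all the work is already carried by Theorems~\ref{thm:hole-charc} and \ref{thm:hole_four}; the new point is simply that the two hypotheses match their two escape clauses exactly, namely ``no outdegree one'' kills every $\stackrel{*}{\to}$ alternative and ``size at most three'' kills every large-partite-set alternative. I therefore expect no real obstacle; the only care needed is to confirm that each conclusion of those theorems is invoked against the correct hypothesis, and to note at the outset that hole vertices are non-sinks so that the outdegree bound is available.
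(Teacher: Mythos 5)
Your proposal is correct and follows essentially the same route as the paper's proof: holes of length at least five are excluded by Theorem~\ref{thm:hole-charc}(2) via the partite-set size bound, and holes of length four are excluded because Theorem~\ref{thm:hole_four}(2) would force either a vertex of outdegree one or a partite set of size at least four. The extra remarks about hole vertices being non-sinks are harmless but not needed, since the hypothesis already forbids outdegree one anywhere in $D$.
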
 
\begin{proof}
To the contrary, suppose $C_{1,2}(D)$ is not chordal.
Then $C_{1,2}(D)$ has a hole $H$.
By Theorem~\ref{thm:hole-charc}(2),
$H$ cannot have length at least five.
Thus $H$ has length four.
Then, by Theorem~\ref{thm:hole_four}(2),
$D$ has a vertex of outdegree one
or $D$ has a partite set of size at least four, which contradicts the hypothesis.
Therefore $C_{1,2}(D)$ has no hole and so is chordal.
\end{proof}
\begin{Thm}\label{thm:partite_size_interval}
	Let $D$ be a loose multipartite tournament.
	If each partite set has size at most two, then $C_{1,2}(D)$ is an interval graph.
\end{Thm}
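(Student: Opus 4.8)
The plan is to apply the Lekkerkerker--Boland characterization and show that $C_{1,2}(D)$ is chordal and contains no asteroidal triple. The second property is immediate: by Lemma~\ref{lem-asterodial} any asteroidal triple of $C_{1,2}(D)$ lies inside a single partite set, yet a partite set of size at most two cannot hold three vertices. So the entire task reduces to proving that $C_{1,2}(D)$ has no hole.

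Holes of length at least five are eliminated directly. By Theorem~\ref{thm:hole-charc}(2) such a hole $L$ would force a partite set $Y$ with $|Y|\ge |V(L)|\ge 5$, contradicting the size hypothesis. It remains to exclude a hole $H=v_0v_1v_2v_3v_0$ of length four, in which $v_0\not\sim v_2$, $v_1\not\sim v_3$, and the four cyclic pairs are edges; note $v_0,\dots,v_3$ are non-sinks since sinks are isolated in $C_{1,2}(D)$.

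For the four-hole I would first invoke Theorem~\ref{thm:hole_four}(1): its option (ii) needs a partite set of size at least four, so the hypothesis forces option (i), giving a \emph{unique} non-$\{1,2\}$-competing partite set $X_1$, necessarily of size two. I then read the possible non-adjacencies off the block matrix of Figure~\ref{fig:structure} (Theorem~\ref{thm:structure}), using that each $F_i=\{v:\emptyset\ne N^+(v)\subseteq X_i\}$ lies in $X_1$ by Theorem~\ref{thm:new-neighbor-stable-size2}(1). Two distinct non-sinks can be non-adjacent only if (a) they share a common $F_i$, (b) one lies in $F_i$ and the other in $X_i$, or (c) both lie in $X_2$. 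Option (c) is impossible under uniqueness: a non-adjacent pair inside the size-two set $X_2$ would make $X_2$ a second non-$\{1,2\}$-competing partite set.

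The contradiction then follows by locating the non-edges $\{v_0,v_2\}$ and $\{v_1,v_3\}$ relative to $X_1$. If one is of type (a), it equals $X_1$ (both endpoints lie in the size-two set $X_1$), and then the other non-edge has no endpoint in $X_1$, so it admits none of the types (a)--(c); hence both non-edges are of type (b). Each type-(b) non-edge then has exactly one endpoint in $X_1$ (its $X_i$-endpoint lying in a different partite set), these two endpoints are distinct, and they fill $X_1$. Since one comes from $\{v_0,v_2\}$ and the other from $\{v_1,v_3\}$, any such choice yields two vertices consecutive on $H$, hence adjacent; thus the size-two set $X_1$ is $\{1,2\}$-competing, contradicting that it is non-$\{1,2\}$-competing. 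This excludes the four-hole and completes chordality, so $C_{1,2}(D)$ is an interval graph. I expect this final step---forcing the unique non-$\{1,2\}$-competing partite set to consist of two hole-adjacent vertices---to be the crux, as it is precisely where looseness and the block structure must be combined.
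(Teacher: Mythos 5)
Your proof is correct, and while it follows the paper's skeleton for most of the argument (Lekkerkerker--Boland, killing asteroidal triples via Lemma~\ref{lem-asterodial}, killing holes of length at least five via Theorem~\ref{thm:hole-charc}(2)), it eliminates the four-hole by a genuinely different route. The paper invokes part (2) of Theorem~\ref{thm:hole_four} to force $v_0 \stackrel{*}{\to} v_2$ and $v_1 \stackrel{*}{\to} v_3$ (after ruling out diagonal pairs sharing a partite set), and then works entirely inside the digraph: it pins down $X_1=\{v_0,v_1\}$, locates $v_3$, and traces arcs to show every partite set is $\{1,2\}$-competing, contradicting looseness. You instead invoke part (1) of Theorem~\ref{thm:hole_four} to get a unique non-$\{1,2\}$-competing partite set $X_1$ of size exactly two, and then argue purely at the level of $C_{1,2}(D)$ via the block structure of Figure~\ref{fig:structure}: since every non-edge between non-sinks must sit in an $F_i\times F_i$, $F_i\times X_i$, or $X_2\times X_2$ block, and $F_i\subseteq X_1$ by Theorem~\ref{thm:new-neighbor-stable-size2}(1), both diagonal non-edges of the hole must be of the $F_i$--$X_i$ type, forcing $X_1$ to consist of two consecutive (hence adjacent) hole vertices --- a contradiction. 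Your version buys a cleaner, arc-free contradiction that leans entirely on the structure theorem and never needs the $\stackrel{*}{\to}$ bookkeeping; the paper's version is more self-contained at the digraph level and reuses the machinery it has already set up for Theorem~\ref{thm:hole_four}(2). Both are valid; each endpoint check in your classification (disjointness of the $F_i$, the $X_i$-endpoint of a type-(b) non-edge lying outside $X_1$, exclusion of the $X_2\times X_2$ case by uniqueness) holds up.
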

\begin{proof}
	Suppose, to the contrary, that every partite set has size at most two and $C_{1,2}(D)$ is not interval.
	Then $C_{1,2}(D)$ has a hole or an asteroidal triple (recall that a graph is an interval graph if and only if it is chordal and contains no asteroidal triple by \cite{lekkeikerker1962representation}).	
	If $C_{1,2}(D)$ has an asteroidal triple, then there is a partite set of size at least three by Lemma~\ref{lem-asterodial}, which is impossible.
	Thus $C_{1,2}(D)$ has no asteroidal triple and so it has a hole $H$.
	Then, by Theorem~\ref{thm:hole-charc}(2), $H$ has length four.	
	Let $H=v_0v_1v_2v_3v_0$.
	Then none of $\{v_0,v_2\}$ and $\{v_1,v_3\}$ belongs to one partite set by (ii) of Theorem~\ref{thm:hole_four}(2).
	Thus $v_0$ and $v_2$ belong to distinct partite sets, say $X_1$ and $X_2$, respectively. 
	 In addition, by (i) of Theorem~\ref{thm:hole_four}(2),\[
(a)\text{ }v_0 \stackrel{*} \to v_2  \text{ or } v_2 \stackrel{*} \to v_0 \quad \text{and} \quad (b)\text{ } v_1 \stackrel{*} \to v_3 \text{ or } v_3 \stackrel{*} \to v_1. \]
By the symmetry of $H$,
we may assume
		Without loss of generality, we may assume
	\begin{equation}\label{eq:thm:size_interval}	
v_0 \stackrel{*} \to v_2 \quad \text{and} \quad   v_1 \stackrel{*} \to v_3.\end{equation}
If $v_1 \notin X_1$, then $v_1 \to v_0$, contradicting \eqref{eq:thm:size_interval}.
Thus $v_1 \in X_1$ and so $X_1=\{v_0,v_1\}$.
	Then $v_3 \notin X_1$ and so, by \eqref{eq:thm:size_interval}, $v_3 \to v_0$.
By \eqref{eq:thm:size_interval}, $v_0$ and $v_1$ have no common out-neighbor. 
If $v_3 \in X_2$, then, by Proposition~\ref{prop:same-partite}(3), $v_0 \not \sim v_1$, which is impossible.
Thus $v_3 \not \in X_2$.
Let $X_3$ be the partite set containing $v_3$.
Then $v_1$ is a common out-neighbor of the vertices in $V(D)\setminus (X_1\cup X_3)$ by \eqref{eq:thm:size_interval}.
Therefore each partite set except $X_1$ and $X_3$ is $\{1,2\}$-competing.
By the way, $X_1$ and $X_3$ are also $\{1,2\}$-competing.
To see why, we note that $v_0$ is a common out-neighbor of $X_3$ by \eqref{eq:thm:size_interval} and so $X_3$ is $\{1,2\}$-competing.
Moreover, since $X_1=\{v_0,v_1\}$ and $v_0 \sim v_1$, 
$X_1$ is $\{1,2\}$-competing.
Thus we have reached a contradiction to the hypothesis that $D$ is a loose multipartite tournament.
\end{proof}

A graph is called {\it $C_4$-free} if it contains no hole of length four.
\begin{Thm} \label{thm:C_4_equivalent}
	Let $D$ be a multipartite tournament such that $D$ has exactly two non-$\{1,2\}$-competing partite sets. Then the following are equivalent:
\begin{itemize}
\item[(i)] $C_{1,2}(D)$ is an interval graph;
\item[(ii)] $C_{1,2}(D)$ is chordal;
\item[(iii)] $C_{1,2}(D)$ is $C_4$-free.
\end{itemize}
In particular, if, except for one partite set of $D$, all have sizes of three or less, then $C_{1,2}(D)$ is an interval graph.
\end{Thm}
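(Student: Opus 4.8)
The plan is to prove the chain $(i)\Rightarrow(ii)\Rightarrow(iii)\Rightarrow(i)$, where the first two implications are the standard facts that every interval graph is chordal and that every chordal graph, having no holes at all, is in particular $C_4$-free. All the content lies in $(iii)\Rightarrow(i)$, and here the hypothesis that $D$ has \emph{exactly two} non-$\{1,2\}$-competing partite sets is the engine that drives everything.

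First I would extract two facts that hold unconditionally under this hypothesis, by contraposition of earlier results. By Lemma~\ref{lem-asterodial}, any asteroidal triple of $C_{1,2}(D)$ would be contained in the \emph{unique} non-$\{1,2\}$-competing partite set; since $D$ has two such partite sets, $C_{1,2}(D)$ has no asteroidal triple. Similarly, by Theorem~\ref{thm:hole-charc}(1), any hole of length at least five would force $D$ to have a single non-$\{1,2\}$-competing partite set, so $C_{1,2}(D)$ has no hole of length at least five. With these two facts in hand the equivalence is immediate: by Lekkeikerker and Boland~\cite{lekkeikerker1962representation}, $C_{1,2}(D)$ is an interval graph if and only if it is chordal and contains no asteroidal triple, so (no asteroidal triple being automatic) interval $\Leftrightarrow$ chordal; and chordal means free of holes of every length, which---since holes of length at least five are already excluded---reduces to being free of holes of length four, that is, to being $C_4$-free. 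This yields $(i)\Leftrightarrow(ii)\Leftrightarrow(iii)$.

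For the ``In particular'' statement, assume in addition that all but one partite set of $D$ have size at most three; by the equivalence just proved it suffices to show $C_{1,2}(D)$ is $C_4$-free. Suppose not, and let $H=v_0v_1v_2v_3v_0$ be a hole of length four, with $X$ one of the two non-$\{1,2\}$-competing partite sets. I would invoke Theorem~\ref{thm:hole_four}(1): alternative (i) there would make $X$ the only non-$\{1,2\}$-competing partite set, contradicting that $D$ has exactly two; so alternative (ii) must hold, giving $|X|\geq 4$ together with a second non-$\{1,2\}$-competing partite set of size at least four. Thus $D$ would have two distinct partite sets of size at least four, contradicting the assumption that all but one partite set have size at most three. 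Hence $C_{1,2}(D)$ is $C_4$-free, and therefore interval.

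The proof is essentially an assembly of the structural results already established, so there is no single hard computation. The step requiring the most care is the ``In particular'' part: one must read Theorem~\ref{thm:hole_four}(1) correctly and notice that its alternative (ii) produces \emph{two} partite sets of size at least four---namely $X$ and the other non-$\{1,2\}$-competing partite set---since it is precisely this conclusion, rather than $|X|\geq 4$ alone, that clashes with the size hypothesis.
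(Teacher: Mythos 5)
Your proposal is correct and follows essentially the same route as the paper: both establish the trivial implications (i)$\Rightarrow$(ii)$\Rightarrow$(iii), then use Lemma~\ref{lem-asterodial} and Theorem~\ref{thm:hole-charc}(1) together with the Lekkeikerker--Boland characterization for (iii)$\Rightarrow$(i), and invoke Theorem~\ref{thm:hole_four}(1) for the ``in particular'' part. Your write-up actually spells out the case analysis of Theorem~\ref{thm:hole_four}(1) (that alternative (ii) yields \emph{two} partite sets of size at least four) more explicitly than the paper's rather terse final sentence, but the underlying argument is identical.
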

\begin{proof}
Recall that a graph is an interval graph if and only if it is chordal and contains no asteroidal triple by \cite{lekkeikerker1962representation} and so (i) implies (ii).
	In addition, since chordal graph has no induced cycle of length four, (ii) implies (iii).
	Now suppose that (iii) is true.
	Since $D$ has more than one $\{1,2\}$-competing partite set,
	$C_{1,2}(D)$ has no asteroidal triple by Lemma~\ref{lem-asterodial} and, by Theorem~\ref{thm:hole-charc}(1), $C_{1,2}(D)$ has no hole of length at least five.	
	Thus $C_{1,2}(D)$ is an interval graph.	
	
	To show the ``in particular" part, suppose that except for one partite set of $D$, all have sizes of three or less,	$C_{1,2}(D)$ is $C_4$-free by Theorem~\ref{thm:hole_four}(1).
	Hence $C_{1,2}(D)$ is an interval graph by (i) $\Leftrightarrow$ (iii).\end{proof}

\section{Acknowledgement}

This work was supported by Science Research Center Program through the National Research Foundation of Korea (NRF) grant funded by the Korean Government (MSIT) (NRF-2022R1A2C1009648 and 2016R1A5A1008055).

\bibliographystyle{plain}

\end{document}